\newcommand\abs[1]{\lvert#1\rvert}
\newcommand\RR{\ensuremath{\mathbb{R}}}
\title{Floquet bundles for tridiagonal competitive-cooperative systems with Applications}
\begin{document}
\author {
Chun Fang\\
Department of
Mathematics and Statistics\\
University of Helsinki, FIN-00014, Finland\\
E-mail: chun.fang@helsinki.fi
\\
\\
Mats Gyllenberg\\
Department of
Mathematics and Statistics\\
University of Helsinki, FIN-00014, Finland\\
E-mail: mats.gyllenberg@helsinki.fi
\\
\\
Yi Wang\thanks{Partially supported by NSF of China No.10971208, and
  the Finnish Center of Excellence
in Analysis and Dynamics.}\\
$^a$Department of Mathematics\\
 University of Science and Technology of China
\\ Hefei, Anhui, 230026, P. R. China
\\
$^b$Department of
Mathematics and Statistics\\
University of Helsinki, FIN-00014, Finland
\\
E-mail: wangyi@ustc.edu.cn\\
}
\date{}

\maketitle

\begin{abstract}
   For a general time-dependent linear competitive-cooperative tridiagonal  system of differential equations, we obtain canonical Floquet invariant bundles which are exponentially separated in the framework of skew-product flows. Such Floquet bundles naturally reduce to the standard Floquet space when the system is assumed to be time-periodic. The obtained Floquet theory is applied to study the dynamics on the hyperbolic omega-limit sets for the nonlinear competitive-cooperative tridiagonal  systems in time-recurrent structures including almost periodicity and almost automorphy.
\end{abstract}

\section{Introduction}
In this paper we study the dynamical properties of systems of differential equations with a tridiagonal structure. To be precise, this refers to systems of the form
\begin{equation}\label{tri-equation-f1}
\begin{split}
\dot{x}_1 &=f_1(t,x_1,x_2),\\
\dot{x}_i &=f_i(t,x_{i-1},x_i,x_{i+1}),\quad 2\leq i\leq n-1,\\
\dot{x}_n &=f_n(t,x_{n-1},x_n).\\
\end{split}
\end{equation}
We assume the nonlinearity $f=(f_1,f_2,\cdots,f_n)$ is defined on $\RR\times \RR^n$ and $C^1$-{\it admissible}, i.e.
$f$ together with its first derivatives with respect to $x=(x_1,x_2,\cdots,x_n)$,  is bounded and
 uniformly continuous on $\RR\times K$ for any compact set $K\subset \RR^n$. For the interpretation of \eqref{tri-equation-f1}, one can think of a hierarchy of
species $x_1,x_2\cdots,x_n$, where $x_i$ is the density or biomass of the ith species. In this
hierarchy, $x_1$ only interacts with $x_2$, $x_n$ only with $x_{n-1}$, and for $i = 2,\cdots,n-1$, $x_i$
interacts with $x_{i-1}$ and $x_{i+1}$. Such a hierarchy may occur in the water column of
an ocean or on a steep mountain side or on island groups, where each species dominates a species
zone (depth, altitude or different island) but is obliged to interact
with other species in the (narrow) overlap of their zones of dominance.

 Our key assumption about the tridiagonal system \eqref{tri-equation-f1} is that the variable $x_{i+1}$ forces
 $\dot{x}_i$, as well as $x_{i}$ forces
 $\dot{x}_{i+1}$, monotonically in the same fashion. That is, there are $\varepsilon_0>0$ and $\delta_i\in \{-1,+1\}$, such that
\begin{equation*}
\noindent {\bf (F)} \qquad \delta_i\dfrac{\partial f_i}{\partial x_{i+1}}(t,x)\ge \varepsilon_0,
\quad \delta_i\dfrac{\partial
f_{i+1}}{\partial x_i}(t,x)\ge \varepsilon_0, \qquad 1\le i\le n-1,
\end{equation*}
for all $(t,x)\in \RR\times \RR^n$. If $\delta_i=-1$ for
all $i$, then (\ref{tri-equation-f1}) is called {\it competitive}.
If $\delta_i=1$ for all $i$, then (\ref{tri-equation-f1}) is called
{\it cooperative}.  Here we do not
allow predator-prey relationship (see, e.g. \cite{AGT}).

We introduce new variables, following Smith
\cite{Smith}. Let $\hat{x}_i=\mu_ix_i,\mu_i\in\{+1,-1\},1\le
i\le n,$ with $\mu_1=1,\mu_i=\delta_{i-1}\mu_{i-1}$. Then the
system (\ref{tri-equation-f1}) transforms into a new system of the
same type with new
\[
\hat{\delta}_i=\mu_i\mu_{i+1}\delta_i=\mu_i^2\delta_i^2=1.
\]
Therefore we can always assume, without loss of generality, that the trdiagonal
system (\ref{tri-equation-f1}) is in fact cooperative, which means that
\begin{equation}\label{assumption}
\noindent \dfrac{\partial f_i}{\partial x_{i+1}}(t,x)\ge \varepsilon_0,
\ \dfrac{\partial
f_{i+1}}{\partial x_i}(t,x)\ge \varepsilon_0, \qquad 1\le i\le n-1, \,(t,x)\in \RR\times \RR^n.
\end{equation}
\noindent  In particular, if the system \eqref{tri-equation-f1}+\eqref{assumption} is linear, we write the corresponding system as the following form:  \begin{equation}\label{tri-diagonal-lin}
\begin{split}
  \dot{x}_1 &= a_{11}(t)x_1+a_{12}(t)x_2,\\
  \dot{x}_i &= a_{i,i-1}(t)x_{i-1}+a_{ii}(t)x_i+a_{i,i+1}(t)x_{i+1},\qquad 2\leq i\leq n-1, \\
  \dot{x}_n &= a_{n,n-1}(t)x_{n-1}+a_{nn}(t)x_n,
\end{split}
\end{equation}
where $a_{i,i+1}(t), a_{i+1,i}(t)\geq\varepsilon_0$, for all $t\in \RR$ and $1\leq i\leq n-1$.

When the linear system \eqref{tri-diagonal-lin} is time-periodic in $t$, Smith \cite{Smith} studied the Floquet theory by using an integer-valued Lyapunov function $\sigma$, first defined by Smillie \cite{Smillie} (see also similar forms by Mallet-Paret and Smith \cite{MS} and Mallet-Paret and Sell \cite{MSe}), and related the values of $\sigma$ to the Floquet multipliers for such linear perodic system. This function $\sigma$ is only continuously defined on an open and dense subset $\Lambda$ of $\RR^n$ (See section \ref{Floquet_solutions}). However, $\sigma$
is well defined for all but an at most finite set of points $t$ along a nontrivial solution of the
linear system \eqref{tri-diagonal-lin}, is locally constant near points where it is defined, and strictly decreasing
as $t$ increases through points where it is not defined. As a consequence, in a certain sense, $\sigma$ can be seen more or less as the discrete analog (although $\sigma$ still possesses somewhat difference) of zero-crossing number of Matano \cite{Ma} (discovered originally by Nickel \cite{Ni}) for scalar reaction-diffusion equations. By utilizing the zero-crossing number, Chow, Lu and Mallet-Paret \cite{CLM2} have already established the Floquet theory for linear periodic scalar parabolic equations.

In the first part of the present paper, we will develop a Floquet theory for a general linear time-dependent system of \eqref{tri-diagonal-lin}, and we express this theory in the language of invariant vector bundles and the so-called exponential separation (see, e.g. \cite{JMSW,Palmer,Po} and references therein). Our approach is fully motivated by the work of Chow, Lu and Mallet-Paret \cite{CLM2,CLM} for time-dependent scalar parabolic equations, and extends earlier work on linear autonomous equations in \cite{Smillie}  and linear time-periodic  equations in \cite{Smith}.

Concretely, for each $0\le m\le n-1$, we associate with a nontrivial solution $x_m(t)$ of \eqref{tri-diagonal-lin} (unique up to constant multiple) with $x_m(t)\in \Lambda$ and $\sigma(x_m(t))=m$ for all $t\in \RR$. These solutions are then treated as a base to decouple \eqref{tri-diagonal-lin} into a system of one-dimensional ordinary differential equations. Moreover, if one writes $W_m$ as the one-dimensional span of $x_m(0)$, we can show that $W_m$ varies continuously with respect to the coefficients of \eqref{tri-diagonal-lin} in certain appropriate topology, and hence, $W_m(\cdot)$ forms a one-dimensional vector bundle (called {\it Floquet bundle of} \eqref{tri-diagonal-lin}) over a proper product space, which will be indicated explicitly later. Moreover, the exponentially separated property is admitted between the various time-dependent Floquet bundles, by which one can obtain a more delicate decomposition of invariant bundles than those induced by Sacker-Sell \cite{SS,SS2} for linear skew-product flows.

 The Floquet bundles obtained here are analogous to ones obtained in \cite{CLM} for time-dependent scalar parabolic equations. However, as the function $\sigma$ is only defined (continuously) on $\Lambda$, not on the whole $\RR^n\setminus\{0\}$ (while the zero-crossing number can be defined on the whole phase space $X$ except for $\{0\}$), it needs more delicate and technical trick to construct the Floquet bundles. Moreover, among other things, one may also run into difficulties when dealing with the phase-points at which the integer-valued Lyapunov function strictly decreases. For zero-crossing number, it entails that such a phase-point, say $u\in X$,  possesses a multiple zero, and hence, one can employ the standard characteristic of $u(\xi)=u_{x}(\xi)=0$, for some $\xi$, to analyze such critical situation (see the detail in \cite[Corollary 4.8 and Theorem 5.1]{CLM}). In our case, the critical point occurs at $x\notin \Lambda$. Nevertheless, there is no similar useful characteristic of such a critical point; and for example, this is indeed the situation we encounter (see, e.g. Proposition \ref{linear_space} below), in which we are trying to show the linear property of the Floquet space. We overcome such difficulty by constructing a pair of so called relevant-sequences simultaneously and accomplish our approaches by analyzing the convergent property of one of such relevant-sequences.

It is well known that the linear theory of invariant bundles plays a crucial role in the study of qualitative properties of nonlinear differential equations. In the second part of this paper, we will investigate the nonlinear tridiagonal system \eqref{tri-equation-f1}+ \eqref{assumption} via the Floquet theory developed in the first part of our paper.

 To be more specific, we embed (\ref{tri-equation-f1}) into
the skew-product flow $\Pi_t:\RR^n\times H(f)\to \RR^n\times H(f),$
\begin{equation*}
\Pi_t(x_0,g)\mapsto (\pi(t,x_0,g),g\cdot t),
\end{equation*}
 where $\pi(t,x_0,g)$
is the solution of
\begin{equation*}
\begin{split}
\dot{x}_1 &=g_1(t,x_1,x_2),\\
\dot{x}_i &=g_i(t,x_{i-1},x_i,x_{i+1}),\quad 2\leq i\leq n-1;\\
\dot{x}_n &=g_n(t,x_{n-1},x_n),\\
\end{split}
\end{equation*}
with $\pi(0,x_0,g)=x_0\in \RR^n$, and $g=(g_1,\cdots,g_n)\in H(f)$,
$(g\cdot t)(\cdot,\cdot)=g(t+\cdot,\cdot)$. Here
$$H(f)\triangleq{\rm cl}\{f\cdot{\tau}|\tau\in \RR, f\cdot{\tau}(t,u)=f(t+\tau,u)\}$$ is called the hull of $f$,
where the closure is taken in the compact open topology (see \cite{Sell}). Obviously, $\pi$ satisfies
the cocycle property, i.e, $\pi(t+s,x,g)=\pi(s,\pi(t,x,g),g\cdot
t)$, for all $s,t\in \RR$ and $g\in H(f)$.

Since $f$ is $C^1$-admissible, the time-translation flow $g\cdot t$ on $H(f)$ is compact. Here we further assume that
the flow on $H(f)$ is even {\it
recurrent} or {\it minimal}. This is satisfied, for instance, when
$f$ is a {\it uniformly almost periodic}, or, more generally, {\it
a uniformly almost automorphic function} (see Definitions \ref{a-p-a-a-function}).

In \cite{Wang}, one of the present authors has shown that any minimal invariant set of $\Pi_t$ is an almost $1$-cover of $H(f)$ (see definitions in
Section \ref{application}),
and every $\omega$-limit set $\omega(x,g)$ of $\Pi_t$
contains at most two minimal sets. Moreover, it was also shown in \cite{Wang} that if the  $\omega$-limit set is distal or uniformly stable, then it is a $1$-cover of $H(f)$.

Inspired by the series work of Shen and Yi \cite{SY,SY2}, we will utilize the obtained Floquet theory to improve the lifting property of the $\omega$-limit sets which are hyperbolic (see Definition \ref{hyper-def} and Theorem \ref{hy-omega}), i.e. any hyperbolic $\omega$-limit set is also a $1$-cover of $H(f)$.

Our results here are natural generalization of the results of
Smillie \cite{Smillie} and Smith \cite{Smith}. Moreover, in a certain
sense, our results also extend to higher dimension ($n\ge 3$) the results in spatially
homogeneous cases by Hetzer and Shen \cite{HS}, who
investigated the dynamics of two-dimensional competitive or
cooperative almost periodic systems. In particular,  for $2$-D competitive or
cooperative almost periodic systems, we have obtained that any hyperbolic $\omega$-limit set is exactly a $1$-cover of the base $H(f)$ (see Corollary \ref{hy-omega-1}). This generalizes the results of
de Mottoni and Schiaffino \cite{MS} and Hale and Somolinos \cite{JHS}, who proved that all
solutions of two-dimensional $T$-periodic competitive or cooperative systems are
asymptotic to $T$-periodic solutions. See also \cite{HS2} and \cite{SW} for extensions of this work.

The paper is organized as follows. The Floquet solutions and spaces of system \eqref{tri-diagonal-lin} are constructed in Section 2 by taking certain limits of periodic linear tridiagonal systems. Moreover, we also relate the values of $\sigma$ to the Floquet solutions, and decouple \eqref{tri-diagonal-lin} into a system of one-dimensional ODEs. In section 3, we define the Floquet bundles and prove the resulting exponential separations between these invariant bundles in the language of the skew-product flow. Finally, we focus on nonautonomous nonlinear cooperative-competitive tridiagonal ODEs in Section 4; and study the lifting properties of hyperbolic omega-limit sets by using the Floquet Theory obtained in the previous sections.


\section{Floquet Solutions and Spaces}\label{Floquet_solutions}

In this section, we focus on the linear tridiagonal system \eqref{tri-diagonal-lin}, with all the coefficient functions being bounded and uniformly continuous on $\mathbb{R}$. Moreover, we assume that $a_{i,i+1}(t), a_{i+1,i}(t)\geq\varepsilon_0$, for all $t\in \RR$ and $1\leq i\leq n-1$.  Hereafter, we write the corresponding
 coefficient matrix $A(t)=(a_{ij}(t))_{n\times n}$. It is easy to see that matrix-valued function $A(t)$ is  tridiagonal and cooperative.

We will build up Floquet solutions and spaces for the general time-dependent linear system (\ref{tri-diagonal-lin}). Following \cite{Smillie,Smith}, we define a continuous map
\[\sigma:\ \Lambda\rightarrow\{0,1,2,\ldots{},n-1\}\]
on
$
\Lambda=\{x\in\mathbb{R}^n: x_1\neq 0, x_n\neq 0\text{ and if } x_i=0
\text{ for some}\ i,\ 2\leq i\leq n-1,\text{ then }x_{i-1}x_{i+1}<0\}
$
by
\begin{equation*}
\sigma(x)=\#\{i:x_i=0\text{ or }x_ix_{i+1}<0\}.
\end{equation*}
Here $\#$ denotes the cardinality of the set. Note that $\Lambda$ is open and dense in $\mathbb{R}^n$ and $\Lambda$ is the maximal domain on which $\sigma$ is continuous.

\newtheorem{dlf}{Lemma}[section]
\begin{dlf}\label{DLF}
Let $x(t)$ be a nontrivial solution of system \eqref{tri-diagonal-lin}. Then
\begin{itemize}
\item [\emph{(i)}] $x(t)\in\Lambda$ except possibly for isolated values of $t$;
\item [\emph{(ii)}] $\sigma(x(t))$ is nonincreasing as $t$ increases with $x(t)\in\Lambda$. Moreover, if $x(s)\notin\Lambda$ for some $s\in \mathbb{R}$ then $\sigma(x(s+))<\sigma(x(s-))$;
\item [\emph{(iii)}] $x(t)\in\Lambda$ and $\sigma(x(t))$ is a constant for $t$ sufficiently positive (resp. negative).

\end{itemize}
\end{dlf}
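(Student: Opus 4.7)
The plan is to prove the three parts sequentially, each relying on the previous one.

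For part (i), the approach is by contradiction. Suppose the set $N=\{t\in\RR:x(t)\notin\Lambda\}$ has an accumulation point $t_0$. Picking a sequence $t_n\to t_0$ in $N$ and passing to a subsequence, one of the three failure mechanisms must recur infinitely often at the same index, so there exists $i\in\{1,\dots,n\}$ with $x_i(t_n)=0$ for all $n$. Continuity yields $x_i(t_0)=0$, and since $x_i$ is $C^1$ with a sequence of zeros accumulating at $t_0$, a mean-value/Rolle argument forces $\dot x_i(t_0)=0$ as well. Plugging into the corresponding row of \eqref{tri-diagonal-lin} and invoking the strict positivity $a_{i,i\pm 1}(t_0)\ge\varepsilon_0$ propagates the vanishing to $x_{i-1}(t_0)$ and $x_{i+1}(t_0)$. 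Iterating this propagation along the tridiagonal structure eventually forces $x(t_0)=0$; uniqueness for the linear Cauchy problem then gives $x\equiv 0$, contradicting nontriviality.

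For part (ii), by (i) the complement of $N$ is a countable union of open intervals, on each of which $\sigma(x(\cdot))$ is integer-valued and continuous, hence constant. It remains to analyze the jump at an isolated critical instant $s\in N$. I would distinguish cases according to which defining condition of $\Lambda$ fails at $s$. The cleanest case is an interior zero $x_i(s)=0$ with $x_{i-1}(s)x_{i+1}(s)>0$: here $\dot x_i(s)=a_{i,i-1}(s)x_{i-1}(s)+a_{i,i+1}(s)x_{i+1}(s)$ has the same strict sign as both neighbors (since $a_{i,i\pm1}(s)\ge\varepsilon_0$), so $x_i$ crosses zero transversally; the pattern $\{x_{i-1},x_i,x_{i+1}\}$ contributes two sign changes just before $s$ and none just after, forcing $\sigma(x(s+))\le\sigma(x(s-))-2$. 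The boundary cases $x_1(s)=0$ or $x_n(s)=0$ are handled by the same transversality, using $\dot x_1(s)=a_{12}(s)x_2(s)$ (resp.\ $\dot x_n(s)=a_{n,n-1}(s)x_{n-1}(s)$), and give a drop of at least one.

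Part (iii) is then immediate: $\sigma(x(t))$ takes integer values in $\{0,1,\dots,n-1\}$, is nonincreasing on $\RR$, and strictly drops at each point of $N$, so $N$ is finite. Therefore $x(t)\in\Lambda$ for $|t|$ large and $\sigma(x(t))$ stabilizes at two (possibly distinct) values as $t\to\pm\infty$.

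The main obstacle lies in part (ii), in the degenerate situation where several adjacent components of $x(s)$ vanish simultaneously, so that $x(s)$ lies on a high-codimension stratum of $\RR^n\setminus\Lambda$. The transversal-crossing analysis then has to be carried out at the boundary of the maximal zero-block $\{i:x_i(s)=0\}$, again exploiting the uniform lower bound $\varepsilon_0$ on the sub- and super-diagonal entries to guarantee a genuine sign pattern change, and carefully bookkeeping the net decrease of $\sigma$ across such a block. This block analysis is also what underlies the propagation step in (i) and is the one place where the strict cooperativity of the tridiagonal matrix is truly indispensable.
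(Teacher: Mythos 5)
The proposal is not an alternative route to the same result; it is an attempted direct proof of what the paper outsources to Smith (Proposition 1.2 of \cite{Smith}), plus the same short derivation of (iii). The derivation of (iii) from (i)--(ii) is correct and matches the paper exactly. The problem is in (i) and (ii), which you are actually trying to prove.

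The ``propagation'' step in (i) has a genuine gap. After Rolle gives $x_i(t_0)=\dot x_i(t_0)=0$, the $i$th equation yields $a_{i,i-1}(t_0)x_{i-1}(t_0)+a_{i,i+1}(t_0)x_{i+1}(t_0)=0$; by itself this only says $x_{i-1}(t_0)$ and $x_{i+1}(t_0)$ have \emph{opposite} signs or both vanish, so one must also invoke the inequality $x_{i-1}(t_n)x_{i+1}(t_n)\ge 0$ (from the failure mechanism) and pass to the limit to conclude both vanish. More seriously, the iteration does not continue: you now know $x_{i-1}(t_0)=x_i(t_0)=x_{i+1}(t_0)=0$, but the $(i-1)$th equation then reads $\dot x_{i-1}(t_0)=a_{i-1,i-2}(t_0)x_{i-2}(t_0)$, which is information-free --- it does \emph{not} say $\dot x_{i-1}(t_0)=0$, and you have no sequence of zeros of $x_{i-1}$ to which Rolle could be applied. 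So the chain stops at the first layer and ``$x(t_0)=0$'' is unjustified. (A correct treatment must analyze the whole zero block of $x(t_0)$, e.g.\ via exponential-weight substitutions $y_i=e^{-\int a_{ii}}x_i$ and induction inward from the block boundary, and/or Rolle applied to $y_i$ to extract zeros of the linear combination $a_{i,i-1}x_{i-1}+a_{i,i+1}x_{i+1}$ between the zeros of $x_i$.)

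For (ii), your transversal computation is right when exactly one component vanishes with $x_{i-1}(s)x_{i+1}(s)>0$ (drop $\ge 2$) or at the boundary (drop $\ge 1$). But you yourself flag the degenerate case of an adjacent zero block as ``the main obstacle'' and leave it unresolved; that case is precisely where the substance of Smith's proof lies, and it also underlies the propagation difficulty in (i). As written, the argument is not complete and does not constitute a valid replacement for the cited result.
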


\begin{proof}
See \cite[Proposition 1.2]{Smith} for the proof of (i) and (ii). By virtue of (i) and (ii), $\sigma(x(t))$ can drop only finite times, which implies (iii).
\end{proof}

When $A(t)$ is periodic in $t$ with period $T$, the  Floquet multipliers of \eqref{tri-diagonal-lin} have the following property.

\newtheorem{periodic}[dlf]{Lemma}
\begin{periodic}\label{Smith}
\begin{itemize}
\item [\emph{(i)}] The $T$-periodic system \eqref{tri-diagonal-lin} has $n$ distinct positive Floquet multipliers $\alpha_{0}$,$\alpha_{1}$, $\ldots$,$\alpha_{n-1}$, satisfying
\begin{equation*}
\alpha_0>\alpha_1>\ldots>\alpha_{n-1}>0;
\end{equation*}
\item [\emph{(ii)}] If $E_{\alpha_m}$ are the corresponding one-dimensional eigenspaces associated with $\alpha_m$ then $E_{\alpha_m}\setminus \{0\}\subset\Lambda$ and
\begin{equation*}
\sigma(E_{\alpha_m}\setminus \{0\})=m,\qquad 0\leq m\leq n-1;
\end{equation*}
\item [\emph{(iii)}] Fix $0\leq m\leq n-1$, there exists a solution $x_m(t)$ such that
\begin{equation*}
\sigma(x_m(t))=m,\quad\text{for } t\in\mathbb{R}.
\end{equation*}
\end{itemize}
\end{periodic}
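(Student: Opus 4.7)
The plan is to work with the monodromy operator $P:\RR^n\to\RR^n$ defined by $Px_0=x(T;x_0)$, whose spectrum consists precisely of the Floquet multipliers of \eqref{tri-diagonal-lin}; the three assertions then become statements about eigenvalues and eigenvectors of $P$.

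First I would show that any nonzero eigenvector $v$ of $P$ with eigenvalue $\alpha$ lies automatically in $\Lambda$, satisfies $\alpha>0$, and produces a solution $x(t;v)$ along which $\sigma$ is constant on all of $\RR$. Iterating, $x(nT;v)=\alpha^n v$; by Lemma \ref{DLF}(iii) the orbit is eventually in $\Lambda$ with $\sigma$ constant, and since both $\Lambda$ and $\sigma$ are invariant under nonzero scaling this forces $v\in\Lambda$ and $\sigma(v)$ to equal the eventual value as $t\to+\infty$, and symmetrically as $t\to-\infty$. Lemma \ref{DLF}(ii) then rules out any interior drop. Finally, if $\alpha<0$, the continuity of $x(t;v)$ together with $x_1(T;v)=\alpha v_1$ being of opposite sign to $v_1$ forces $x_1$ to vanish somewhere in $(0,T)$; consequently $x(t;v)\notin\Lambda$ there and $\sigma$ drops strictly, a contradiction. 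Hence $\alpha>0$.

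Second, I would establish that $P$ admits exactly $n$ simple positive eigenvalues, indexed by $\sigma$. One-dimensionality of each eigenspace $E_{\alpha}$ follows by contradiction: if $\dim E_{\alpha}\ge 2$ one can pick $v\in E_{\alpha}\setminus\{0\}$ with $v_1=0$, so $v$ is an eigenvector lying outside $\Lambda$, against step one. Complex eigenvalues and nontrivial Jordan blocks are excluded by the same device on a real two-dimensional $P$-invariant subspace $V$: any orbit starting in $V$ returns to a real multiple of itself after finitely many periods, yet within $V$ one can arrange a vanishing first coordinate, and the asymptotic dominance argument of step one then forces a sign change in some $x_i$ over a period and a $\sigma$-drop incompatible with periodic return---the detailed $\sigma$-theoretic exclusion follows the line of \cite{Smith}. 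The ordering is obtained by comparison: for two eigenvectors $v,w$ with multipliers $\alpha>\beta>0$, the Floquet decomposition of $x(t;v+w)$ is dominated by $v$ as $t\to+\infty$ and by $w$ as $t\to-\infty$, so $\sigma$-monotonicity yields $\sigma(v)<\sigma(w)$. A dimension count then shows $\sigma$ takes each of the values $0,1,\ldots,n-1$ exactly once on eigenspaces, so the multipliers can be relabelled $\alpha_0>\alpha_1>\cdots>\alpha_{n-1}>0$ with $\sigma(E_{\alpha_m}\setminus\{0\})=m$, which is (i) and (ii).

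Assertion (iii) is immediate: given $m$, take any $v_m\in E_{\alpha_m}\setminus\{0\}$ and set $x_m(t):=x(t;v_m)$; step one gives $\sigma(x_m(t))\equiv m$. The main obstacle is the second step, especially ruling out complex multipliers and nontrivial Jordan structure purely by means of the integer-valued Lyapunov function $\sigma$; this is the tridiagonal counterpart of the Smillie--Smith discrete Lyapunov argument and would be carried out essentially along the lines of \cite{Smith}.
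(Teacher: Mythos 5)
Your proof of (iii) is essentially the paper's: both exploit $x_m(kT)=\alpha_m^k\,v_m$ (the paper writes this via the Floquet form $e^{\mu_m t}p_m(t)$ with $p_m$ periodic), combine Lemma \ref{DLF}(iii) with the scaling invariance of $\sigma$ to pin down the eventual value as $m$, and then use Lemma \ref{DLF}(ii) to rule out any interior drop. For (i) and (ii) the paper simply cites Smith's Theorem 1.3, and you likewise defer to Smith for the genuinely hard exclusions (complex multipliers, Jordan blocks, strict ordering of $\sigma$ across eigenspaces), so the two treatments are in substance the same.
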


\begin{proof}
For (i) and (ii), see \cite[Theorem 1.3]{Smith}. We only  prove (iii). Fix $0\leq m\leq n-1$. Note that $\alpha_m$ is a positive number. By the standard Floquet theory, there exists a nontrivial solution of \eqref{tri-diagonal-lin} \begin{equation*}
x_{m}(t)=e^{\mu_mt}p_m(t),
\end{equation*}
where $\mu_m=\ln \alpha_m$ and $p_m(t)$ is a $T$-periodic function with $p_m(0)\in E_{\alpha_m}\backslash\{0\}$. Since $p_m(kT)=p_m(0)\in E_{\alpha_m}\backslash\{0\}$, (ii) implies that $p_m(kT)\in\Lambda$ and $\sigma(p_m(kT))=m$ for all $k\in\mathbb{Z}$. Combined by Lemma \ref{DLF}(iii), we readily get $x_m(t)\in\Lambda$ and $\sigma(x_m(t))=m$ for all $|t|$ sufficiently large. Then by Lemma \ref{DLF}(ii), we have $x_m(t)\in\Lambda$ and $\sigma(x_m(t))=m$ for all $t\in\mathbb{R}$, which completes the proof.
\end{proof}

The following proposition shows that Lemma \ref{DLF}(iii) can still hold for the general time-dependent system \eqref{tri-diagonal-lin}.

\newtheorem{solutionsbyzeros}[dlf]{Proposition}
\begin{solutionsbyzeros}\label{solutionsbyzeros}
For each $0\leq m\leq n-1$, there exists a solution $x_m(t)$
of \eqref{tri-diagonal-lin}
satisfying
\[\sigma(x_m(t))=m,\text{ for all } t\in\mathbb{R}.\]
\end{solutionsbyzeros}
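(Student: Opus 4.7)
The plan is to realize $x_m$ as a limit of Floquet solutions to $T_k$-periodic approximations of \eqref{tri-diagonal-lin}, in accord with the paper's stated strategy of obtaining the Floquet solutions ``by taking certain limits of periodic linear tridiagonal systems.'' First I would construct a sequence $A_k(t)$ of continuous $T_k$-periodic tridiagonal matrices, with $T_k\to\infty$, satisfying the cooperative lower bound $a_{i,i+1}^k(t),\,a_{i+1,i}^k(t)\geq\varepsilon_0$ and converging to $A(t)$ uniformly on every compact subset of $\RR$. A concrete realization: set $A_k(t)=A(t)$ on $[-k,k]$, interpolate linearly on the glue interval $[k,k+1]$ between $A(k)$ and $A(-k)$, and then extend with period $2k+1$. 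Because the lower bound $\varepsilon_0$ on the sub- and super-diagonal entries is preserved under convex combinations, each $A_k$ inherits the hypotheses of Lemma \ref{Smith}.

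Next, for every $k$, Lemma \ref{Smith}(iii) yields a nontrivial solution $y^k(t)$ of the $A_k$-system satisfying $\sigma(y^k(t))=m$ for all $t\in\RR$. After normalizing $|y^k(0)|=1$ and extracting a subsequence, compactness of the unit sphere gives $y^k(0)\to x_0$ for some unit vector $x_0$. Let $x_m(t)$ denote the solution of \eqref{tri-diagonal-lin} with $x_m(0)=x_0$; continuous dependence of ODE solutions on coefficients (uniform convergence on compact time intervals follows from a standard Gronwall estimate) yields $y^k(t)\to x_m(t)$ uniformly on compact intervals, and $x_m$ is nontrivial since $|x_m(0)|=1$.

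The main obstacle, and the step where the argument is genuinely delicate, is now showing $\sigma(x_m(t))=m$ for \emph{every} $t\in\RR$, not merely for those $t$ at which $x_m(t)$ happens to lie in the open set $\Lambda$ on which $\sigma$ is continuous. For any such $t$, openness of $\Lambda$ gives $y^k(t)\in\Lambda$ for large $k$, hence $\sigma(x_m(t))=\sigma(y^k(t))=m$ by local constancy of the integer-valued $\sigma$ on $\Lambda$. By Lemma \ref{DLF}(i) this already pins down $\sigma(x_m(t))=m$ off an isolated set. To close the gap, suppose for contradiction that $x_m(s)\notin\Lambda$ at some isolated $s$; Lemma \ref{DLF}(ii) then forces the strict drop $\sigma(x_m(s+))<\sigma(x_m(s-))$, but both one-sided limits equal $m$ by the preceding argument applied in a punctured neighborhood of $s$ --- a contradiction. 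Hence $x_m(t)\in\Lambda$ and $\sigma(x_m(t))=m$ for all $t\in\RR$.
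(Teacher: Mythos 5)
Your proof is correct and follows essentially the same route as the paper's: approximate $A$ by periodic tridiagonal cooperative matrices, pull back the Floquet solutions from Lemma~\ref{Smith}(iii) via normalization, compactness of the unit sphere and a Gronwall estimate, and then use local constancy of $\sigma$ on the open set $\Lambda$ together with the strict-drop property of Lemma~\ref{DLF}(ii) to force $\sigma(x_m(t))\equiv m$. Incidentally, your gluing (linear interpolation on $[k,k+1]$ between $A(k)$ and $A(-k)$, then $(2k+1)$-periodic extension) is cleaner than the paper's formula, which interpolates the matrix down to $0$ at the endpoints of $[-k-1,k+1]$ and so momentarily loses the lower bound $a^{k}_{i,i+1},a^{k}_{i+1,i}\geq\varepsilon_0$ that Lemma~\ref{Smith} requires; your convex-combination construction preserves it automatically.
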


\begin{proof}
We begin by constructing a sequence of continuous matrix-valued functions $\{A_k(t)\}_{k=1}^{\infty}$, where
\begin{equation*}
A_k(t)=\left\{
\begin{array}{ll}
(t+k+1)A(-k) & \text{if } -k-1<t<-k,\\
A(t) & \text{if } -k<t<k,\\
(k+1-t)A(k) & \text{if }\ k<t<k+1,
\end{array}
\right.
\end{equation*}
on $[-k-1,k+1]$; and moreover through a standard practice one can extend $A_k(t)$ to a $2(k+1)$-periodic function on $\mathbb{R}$. It is easily seen that $\{A_k(t)\}_{k=1}^{\infty}$ is uniformly bounded and $A_k(t)$ converges to $A(t)$ uniformly on any compact set in $\mathbb{R}$.

For each $k\geq 1$, consider the $2(k+1)$-periodic equation
\begin{equation*}
\dot{x}=A_k(t)x.
\end{equation*}
Note that $A_k(t)$ is of cooperative tridiagonal form. Then Lemma \ref{Smith}(iii) implies that for each $0\leq m\leq n-1$, there exists a solution $x_m^{(k)}(t)$ defined on $\mathbb{R}$ such that
\begin{equation*}
\sigma(x_m^{(k)}(t))=m,\quad\text{ for all }t\in\mathbb{R}.
\end{equation*}
We normalize these solutions so that the initial points satisfy $|x_m^{(k)}(0)|=1$.

Fix $0\leq m\leq n-1$ and consider the sequence $\{x_m^{(k)}(0)\}_{k=1}^{\infty}$. Then there exists a subsequence $\{k'\}$ such that $x_m^{(k')}(0)\rightarrow y_m$, with $|y_m|=1$, as $k'\rightarrow\infty$. Recall that $\{A_{k'}(t)\}$ is uniformly bounded and tends to $A(t)$ uniformly on any compact interval. By virtue of Grownwall inequality, the corresponding solution $x_m(t)$ of equation \eqref{tri-diagonal-lin} with initial values $x_m(0)=y_m$ satisfies that $x_m^{(k')}(t)\rightarrow x_m(t)$ uniformly on any compact interval as $k'\rightarrow\infty$.

We claim that $x_m(t)\in \Lambda$ and $\sigma(x_m(t))\equiv m$ for all $ t\in\mathbb{R}$. Indeed, by Lemma \ref{DLF}(iii), one can find a $t_0>0$, such that $x_m(t)\in\Lambda$ and $\sigma(x_m(t))=N_1$ (resp. $N_2$) for all $t\geq t_0$ (resp. $t\leq -t_0$). On the other hand, using the openness of $\Lambda$ and $\lim_{k\rightarrow\infty}x_m^{(k)}(t_0)=x_m(t_0)$, we obtain that $\sigma(x_m^{(k)}(t_0))=N_1$ and $\sigma(x_m^{(k)}(-t_0))=N_2$ for all $k$ sufficiently large. It then follows from Lemma \ref{Smith}(iii) that
\[N_1=N_2=m\quad\text{ for all }t\in\mathbb{R},\]
which implies that $x_m(t)\in\Lambda$ for all $t\in\mathbb{R}$. This completes the proof.\end{proof}

For integers $0\leq m\leq l\leq n-1$, we define the set
\begin{eqnarray*}
W_{m,l}(A)&=&\{x\in\mathbb{R}^n\setminus \{0\}|\text{the solution } x(t) \text{ of }
\eqref{tri-diagonal-lin} \text{ with } x(0)=x \nonumber \\
&& \text{ satisfies } m\leq\sigma(x(t))\leq l,  \text{ whenever } x(t)\in \Lambda \}\cup\{0\}.
\end{eqnarray*}
Clearly, Proposition \ref{solutionsbyzeros} implies that $W_{m,l}(A)$ is a nonempty set. Moreover, we have

\newtheorem{linear_space}[dlf]{Proposition}
\begin{linear_space}\label{linear_space}
The set $W_ {m,l}(A)$ is a linear subspace of $\mathbb{R}^n$; and \[\dim(W_{m,l}(A))=l-m+1.\]
\end{linear_space}

\begin{proof}
Take $x_0, y_0\in W_{m,l}(A)$. Let $x(t)$ and $y(t)$
 are nontrivial solutions of \eqref{tri-diagonal-lin} with $x(0)=x_0, y(0)=y_0$ such that $m\leq\sigma(x(t)),\sigma(y(t))\leq l$, whenever $x(t),y(t)\in \Lambda$. Since $\sigma(\alpha x)=\sigma(x)$ for any $x\in \Lambda$ and $\alpha\neq 0$, it suffices to show that \[m\leq\sigma(x(t)+y(t))\leq l,\quad\text{whenever }x(t)+y(t)\in \Lambda.\]
We only prove  the upper bound $l$, as the proof of the lower
bound is similar.

To this end, by (ii)-(iii) of Lemma \ref{DLF}, suppose that there exists a $t_0\in\mathbb{R}$ such that
\begin{equation}\label{x+y-lin}
x(t)+y(t)\in\Lambda\text{ and }\sigma(x(t)+y(t))>l, \quad\text{for all }t\le t_0.
\end{equation}
Since $\Lambda$ is an open set, there exist a $\delta_0\in (0,1)$ such that for any $\delta\in [0,\delta_0]$,
$x(t_0)+(1-\delta)y(t_0)\in\Lambda$ and $\sigma(x(t_0)+(1-\delta)y(t_0))=\sigma(x(t_0)+y(t_0))>l$
For each $\delta\in [0,\delta_0]$, since $x(t)+(1-\delta)y(t)$ is a solution of \eqref{tri-diagonal-lin}, Lemma \ref{DLF}(ii-iii) implies there exists a $t_{\delta}\leq t_0$ such that,
\begin{equation}\label{pertu-open}
\sigma(x(t)+(1-\delta)y(t))>l,\quad\text{for all } t\leq t_{\delta}.
\end{equation}

Now, let us fix a $\delta\in (0,\delta_0]$. By virtue of \eqref{x+y-lin}
and \eqref{pertu-open}, one can choose a sequence
$\{t_k\}_{k=1}^\infty,$ satisfying $\min\{t_0,t_{\delta}\}>t_k\to
-\infty$, such that
\begin{equation*}
\sigma(x(t_k)+(1-\delta)y(t_k))>l \text{ and } \sigma(x(t_k)+y(t_k))>l,
\end{equation*}
 for all $k\ge 1$.
Recall that $x_0,y_0\in W_{m,l}(A)$, one may further assume that
$\sigma(x(t_k)), \sigma(y(t_k))\leq l$ for all $k\ge 1$.
Consequently, for each $k\ge 1$, there exist $\delta_1(t_k)\in (0,1-\delta)$
and $\delta_2(t_k)\in (0,1)$ such that
\[x(t_k)+\delta_1(t_k)y(t_k) \notin \Lambda\quad\text{and }\quad\delta_2(t_k)x(t_k)+y(t_k) \notin \Lambda.\]
Now we define the following pair of relevant-sequences
\[\tilde{z}_{k} = \frac{x(t_k)+\delta_1(t_k)y(t_k)}{\abs{x(t_k)}+\abs{y(t_k)}}\quad\text{and}
\quad \tilde{w}_{k} =
\frac{\delta_2(t_k)x(t_k)+y(t_k)}{\abs{x(t_k)}+\abs{y(t_k)}}.\]
Clearly, $\tilde{z}_k,\tilde{w}_k\notin\Lambda$ for all $k\ge 1$.
Take a subsequence of $\{t_k\}$ if necessary,  we may also assume
that $\frac{x(t_k)}{\abs{x(t_k)}+\abs{y(t_k)}}\rightarrow
\tilde{z}_*$, $\frac{y(t_k)}{\abs{x(t_k)}+\abs{y(t_k)}}\rightarrow
\tilde{w}_*$, $\delta_1(t_k)\to \delta_{1*}\in [0,1-\delta]$ and
$\delta_2(t_k)\to \delta_{2*}\in [0,1]$, as $n\to \infty$. Then one obtains that $\tilde{z}_k\to\tilde{z}_*+\delta_{1*}\tilde{w}_*
\triangleq z_*\notin \Lambda$ and
$\tilde{w}_k\to\delta_{2*}\tilde{z}_*+\tilde{w}_*\triangleq
w_*\notin \Lambda$ as $k\to \infty$, because $\Lambda$ is an open
set. Moreover, the vector $(z_*,w_*)\ne (0,0)$, since $0\le
\delta_{1*}\le 1-\delta$ and $0\le \delta_{2*}\le 1$. (Otherwise, it
follows that $(\tilde{z}_*,\tilde{w}_*)=(0,0),$ which yields that
$1=\frac{\abs{x(t_k)}}{\abs{x(t_k)}+\abs{y(t_k)}}+\frac{\abs{y(t_k)}}{\abs{x(t_k)}+\abs{y(t_k)}}
\to \abs{\tilde{z}_*}+\abs{\tilde{w}_*}=0$, a contradiction.)

\vskip2mm
Without loss of generality, we now assume that $z_*\neq 0$.
For each $k\ge 1$, let \begin{eqnarray*} z_{t_k}(t) =
\frac{x(t+t_k)+\delta_{1*}\cdot
y(t+t_k)}{\abs{x(t_k)}+\abs{y(t_k)}}, \quad t\in \RR.
\end{eqnarray*}
Clearly, $z_{t_k}(t)$ is a nontrivial solution of the equation
\[\dot{x}=A_{t_k}(t)x\triangleq A(t+t_k)x,\]
with the initial value
$z_{t_k}(0)=\tilde{z}_k\to z_*$ as $k\to \infty.$ Recall that $A(t)$ is bounded and uniformly continuous on $\mathbb{R}$. Then one can find a subsequence, still denoted by $\{t_k\}$, such that $A_{t_k}(t)$ converges to $A_*(t)$ uniformly on any compact interval as $k\rightarrow\infty$. Let $z_*(t)$ be the nontrivial solution of
$\dot{x}=A_*(t)x$ with the initial value $z_*(0)=z_*\neq 0$. Then by Lemma \ref{tech} below, one can deduce that $z_*(s)\in\Lambda$ and $\sigma(z_*(s))\equiv{\rm const}$ for all $s\in\mathbb{R}$. In particular, $z_*=z_*(0)\in\Lambda$, a contradiction. Thus we have proved that $W_{m,l}(A)$ is a linear space.

Next we note that
\[\{0\}\subsetneq W_{0,0}(A)\subsetneq W_{0,1}(A)\subsetneq\ldots{}\subsetneq W_{0,n-1}(A),\]
due to the definition of $W_{m,l}(A)$ and Proposition \ref{solutionsbyzeros}.
Since $W_{0,l}(A)$ is a linear space, in order to prove $\dim(W_{m,l}(A))=l-m+1$, it suffices to show that $\dim(W_{0,l}(A))=l+1$ for each $0\leq l\leq n-1$.

Note that $W_{k,k}(A)\subseteq W_{0,l}(A)$ for all $0\leq k\leq l$ and the solutions $x_k(t)\in W_{k,k}(A)$, obtained in Proposition \ref{solutionsbyzeros}, are linearly independent. So, $\dim (W_{0,l}(A))\geq l+1$ (Otherwise, one may find some $0\leq k\leq l$ with $x_k(t)=a_0x_0(t)+\cdots+a_{k-1}x_{k-1}(t)$, which implies that $x_k(t)\in W_{0,k-1}(A)$, contradicting that $x_k(t)\in W_{k,k}(A)$). On the other hand, suppose that $\dim(W_{0,l}(A))> l+1$. Then by induction one can deduce $\dim(W_{0,n-1}(A))>n$, which contradicts the fact $\dim(W_{0,n-1}(A))=n$. Thus we have proved that $\dim(W_{0,l}(A))=l+1$.
\end{proof}

\newtheorem{tech}[dlf]{Lemma}
\begin{tech}\label{tech}
Let $A(t)$ be in \eqref{tri-diagonal-lin} and $A_{\tau}$ be the time $\tau$-shift of $A$, that is $A_{\tau}(t)=A(t+\tau)$. Let also $x(t)$ be a nontrivial solution of \eqref{tri-diagonal-lin}. If there exists a sequence $t_k\rightarrow\infty$ (or $t_k\rightarrow -\infty$) such that $x(t_k)\rightarrow x_*\neq 0$ and $A_{t_k}$ converges to $A_*(t)$ uniformly on any compact interval of $\mathbb{R}$, then the solution $x_*(t)$ of $\dot{x}=A_*(t)x$, with initial value $x_*(0)=x_*$, satisfies
\[x_*(t)\in\Lambda\quad\text{and}\quad\sigma(x_*(t))\equiv {\it const},\]
for any $t\in\mathbb{R}$.
\end{tech}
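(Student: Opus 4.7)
The plan is to compare the integer $\sigma$ along the shifted trajectories $x(\cdot+t_k)$ and the limit trajectory $x_*$, using the uniform convergence to force the same tail value on both; the monotonicity properties in Lemma \ref{DLF} then leave no room for $\sigma(x_*(\cdot))$ to drop, and $x_*(t)$ must stay in $\Lambda$ forever. I assume without loss of generality that $t_k\to+\infty$; the case $t_k\to-\infty$ follows by reversing time.

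First I would verify that $A_*(t)$ is again tridiagonal and cooperative with the same constants: the zero off-tridiagonal pattern is preserved by pointwise limits, the lower bounds $a_{i,i\pm 1}(t)\ge\varepsilon_0$ pass through uniform limits on compact intervals, and boundedness and uniform continuity on $\mathbb{R}$ are inherited. Thus Lemma \ref{DLF} applies both to $x(t)$ as a solution of \eqref{tri-diagonal-lin} and to every nontrivial solution of $\dot{x}=A_*(t)x$. By Gronwall, the uniform convergence $A_{t_k}\to A_*$ on compact intervals together with $x(t_k)\to x_*$ gives $x(t+t_k)\to x_*(t)$ uniformly on each compact interval in $t$.

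Next I would apply Lemma \ref{DLF}(iii) twice. Applied to $x$ there exist $T>0$ and an integer $N$ with $\sigma(x(t))=N$ for all $t\ge T$. Applied to $x_*$ there exist $t_0>0$ and integers $M_{\pm}$ with $x_*(t)\in\Lambda$, $\sigma(x_*(t))=M_+$ for $t\ge t_0$, and $\sigma(x_*(t))=M_-$ for $t\le -t_0$. Fix any $t$ with $|t|\ge t_0$. Because $\Lambda$ is open and $\sigma$ is continuous there, the convergence $x(t+t_k)\to x_*(t)\in\Lambda$ gives $\sigma(x(t+t_k))=\sigma(x_*(t))$ for all large $k$. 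On the other hand $t+t_k\to+\infty$ (regardless of the sign of the fixed $t$, since $t_k\to+\infty$), so $\sigma(x(t+t_k))=N$ for all large $k$. This forces $M_+=M_-=N$.

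Finally, $\sigma(x_*(\cdot))$ is an integer-valued non-increasing function on the set where $x_*\in\Lambda$, equal to $N$ on both tails by the previous step. If there existed some $s$ with $x_*(s)\notin\Lambda$, Lemma \ref{DLF}(ii) would give the strict drop $\sigma(x_*(s+))<\sigma(x_*(s-))$, which cannot be reconciled with $\sigma(x_*(\cdot))$ being sandwiched between $N$ and $N$. Hence $x_*(t)\in\Lambda$ for all $t\in\mathbb{R}$ and $\sigma(x_*(t))\equiv N$. The main subtlety (and the only place where $t_k\to+\infty$ is used beyond just being unbounded) is matching both $M_+$ and $M_-$ to the single tail value $N$ of $x$; once that is in hand, the monotonicity of $\sigma$ rules out every candidate jump.
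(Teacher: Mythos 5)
Your proof is correct and follows essentially the same line as the paper's: pass the $\sigma$-value through the uniform-on-compacts convergence $x(t+t_k)\to x_*(t)$ and the continuity of $\sigma$ on $\Lambda$, match it to the tail value of $\sigma$ along $x$ from Lemma~\ref{DLF}(iii), and conclude via Lemma~\ref{DLF}(ii) that no drop (hence no exit from $\Lambda$) can occur. The paper reaches the same conclusion in one step by pinning $\sigma(x_*(s))$ for every $s$ with $x_*(s)\in\Lambda$ directly, while you first fix the two tail values $M_\pm$ of $x_*$ and then equate them to $N$; the content is the same.
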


\begin{proof}
For each $t_k$, we first note that $x(t+t_k)$, $t\in\mathbb{R}$, is a nontrivial solution of $\dot{x}=A_{t_k}(t)x$. It then follows from Grownwall inequality that $x(t+t_k)$ tends to $x_*(t)$ uniformly on any compact interval. So for any $s\in\mathbb{R}$ with $x_*(s)\in\Lambda$, the continuity of $\sigma$ implies that
\begin{equation}\label{limcon}
\sigma(x_*(s))=\lim_{n\rightarrow\infty}\sigma(x(s+t_n))=N_1,
\end{equation}
where the last equality is due to Lemma \ref{DLF}(iii). By the assumption on $A(t)$, it is easy to see that Lemma \ref{DLF} holds for the solution $x_*(t)$ of the equation $\dot{x}=A_*(t)x$. Therefore, \eqref{limcon} yields that $x_*(t)\in\Lambda$ for any $t\in\mathbb{R}$, and hence $\sigma(x_*(t))\equiv{\rm const}$ for all $t\in\mathbb{R}$.
\end{proof}

\newtheorem{rk1}[dlf]{Remark}
\begin{rk1}\label{remark1}
\emph{By Proposition \ref{linear_space}, for each $0\leq m\leq n-1$, the solution $x_m(t)$ obtained in Proposition \ref{solutionsbyzeros} is unique up to a constant multiple. As a consequence, we can normalize $x_m(t)$ so that $|x_m(0)|=1$ and the first coordinate of $x_m(0)$ is positive. Hereafter we always use these normalized solutions in the following context.}
\end{rk1}

We call the spaces $\{W_{m,l}(A)\}_{0\leq m\leq l\leq n-1}$ and these normalized solutions $\{x_m(t)\}_{0\leq m\leq n-1}$, \emph{Floquet spaces} of \eqref{tri-diagonal-lin} and \emph{Floquet solutions} of \eqref{tri-diagonal-lin}, respectively.

\section{Floquet Bundles and Exponential Separation}\label{Floquet_Theory}

Based on the results obtained in previous section, we study in this section the following differential equations with parameter $y\in Y$:
\begin{equation}\label{tri-diagonal_system}
\dot{x}=B(y\cdot t)x, \quad x\in\mathbb{R}^n,\ y\in Y,
\end{equation}
where $y\cdot t$ defines a flow on a compact metric space $(Y,d)$, and $B$ is a continuous matrix-valued function on $Y$. Moreover, $B(y)$ is assumed to be cooperative tridiagonal for each $y\in Y$. Moreover, we denote by $\Phi(t,y)$ as the principal fundamental matrix solution of \eqref{tri-diagonal_system}.

By Proposition \ref{linear_space} and Remark \ref{remark1}, one can obtain the Floquet spaces $W_{l,m}(y)$ and solutions $x_m(t,y)$ associated with each $y\in Y$, where $0\leq l\leq m\leq n-1$. For brevity, we hereafter use the short notations $W_k(y)$ and $x_m(y)$ instead of $W_{k,k}(y)$ and $x_m(0,y)$.

\newtheorem{decomposition}{Proposition}[section]
\begin{decomposition}\label{decomposition}
For $0\leq l\leq m\leq n-1$ and $y\in Y$, we have
\begin{itemize}
\item [\emph{(i)}] $W_{m}(y)=\emph{span}\{x_m(y)\}$ and $W_{l,m}(y)$ has the direct sum decomposition
\begin{equation*}
W_{l,m}(y)=\oplus_{k=l}^{m}W_k(y);
\end{equation*}
\item [\emph{(ii)}] $\Phi(t,y)W_{l,m}(y)=W_{l,m}(y\cdot t)$, for all $t\in\mathbb{R}$;
\item [\emph{(iii)}] $W_{l,m}(y)$ varies continuously with $y\in Y$ as a subspace of $\mathbb{R}^n$.

\end{itemize}

\end{decomposition}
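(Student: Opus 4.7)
The plan is to dispose of parts (i) and (ii) quickly from the Section~\ref{Floquet_solutions} results, and concentrate the effort on (iii), which reduces to proving continuity of each one-dimensional bundle $y \mapsto W_k(y)$.

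For (i), Proposition~\ref{linear_space} applied to the coefficient matrix $A(t) = B(y\cdot t)$ gives $\dim W_k(y) = 1$ for each $k$, while Proposition~\ref{solutionsbyzeros} supplies a nontrivial $x_k(y) \in W_k(y)$, whence $W_k(y) = \mathrm{span}\{x_k(y)\}$. The normalized Floquet solutions $\{x_k(y)\}_{l \le k \le m}$ are linearly independent: any nontrivial relation, truncated to its highest index, would place some $x_j(y)$ in $W_{l,j-1}(y)$, contradicting $\sigma(x_j(t,y)) \equiv j$ wherever defined. A dimension count then yields the direct-sum decomposition. For (ii), the cocycle identity $\Phi(s, y \cdot t)\Phi(t, y) = \Phi(s + t, y)$ identifies the trajectory through $\Phi(t, y)x_0$ under the shifted parameter $y \cdot t$ with the time-$t$ tail of the original trajectory through $x_0$, so the $\sigma$-bounds defining membership in $W_{l,m}$ transfer verbatim.

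The heart of the proposition is part (iii). I would prove continuity of each map $y \mapsto x_k(y) \in \RR^n$; once that is established, $W_{l,m}(y) = \bigoplus_{k=l}^{m} W_k(y)$ varies continuously in the Grassmannian. Fix $y \in Y$ and $y_n \to y$. Joint continuity of the flow on compact $Y$ together with continuity of $B$ yield that $B(y_n \cdot t) \to B(y \cdot t)$ uniformly on each compact time interval. Since $\abs{x_k(y_n)} = 1$, a subsequence converges to some unit vector $z$; by a Gronwall estimate, the corresponding solutions $x_k(\cdot, y_n)$ converge, uniformly on compacta, to the solution $z(t)$ of $\dot{x} = B(y \cdot t) x$ with $z(0) = z$. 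The identification $z = x_k(y)$ then proceeds along the lines of Proposition~\ref{solutionsbyzeros}: by Lemma~\ref{DLF}(iii), $\sigma(z(t))$ is constantly equal to values $N^{\pm}$ for $\pm t \ge t_0$; openness of $\Lambda$ and continuity of $\sigma$ on $\Lambda$ then give $N^{\pm} = \lim_n \sigma(x_k(\pm t_0, y_n)) = k$, and Lemma~\ref{DLF}(ii) propagates this to $\sigma(z(t)) \equiv k$ on its domain in $\Lambda$. Hence $z \in W_k(y)$, and the normalization in Remark~\ref{remark1} (the first coordinate of $z$ is the limit of positive numbers and remains nonzero since $z \in \Lambda$, hence is positive) pins down $z = x_k(y)$. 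Subsequence-independence of the limit then yields convergence along the full sequence.

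The main obstacle is precisely this last identification: one must control $\sigma(z(t))$ for every $t \in \RR$, but $\sigma$ is only defined on $\Lambda$, so the pointwise convergence $x_k(t, y_n) \to z(t)$ supplies direct $\sigma$-information only at those $t$ for which $z(t) \in \Lambda$. The remedy, mirroring the construction in Proposition~\ref{solutionsbyzeros} (and paralleling Lemma~\ref{tech}), is to read off the eventual values $N^{\pm}$ at two well-chosen single time instants and then let Lemma~\ref{DLF}(ii)--(iii) propagate the constant value globally.
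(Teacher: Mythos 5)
Your proposal is correct and follows essentially the same route as the paper. Part (i) is exactly what the paper obtains as a corollary of the proof of Proposition~\ref{linear_space}; part (iii) reproduces the paper's subsequence/Gronwall argument with the $\sigma$-identification carried out via Lemma~\ref{DLF}(ii)--(iii) and the openness of $\Lambda$, and correctly fills in the detail of why the normalization pins down the limit. For (ii) you argue directly with the cocycle identity on $W_{l,m}$, whereas the paper reduces to the one-dimensional $W_k$ and invokes uniqueness from Remark~\ref{remark1}; these are equivalent and yield the same invariance.
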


\begin{proof}

(i) This is a direct corollary of the proof of Proposition \ref{linear_space}.

(ii) Due to (i), it suffices to show that, for each $0\leq k\leq n-1$, $\Phi(t,y)W_k(y)=W_k(y\cdot t)$ for all $t\in\mathbb{R}$. Since $W_{k}(y)=\text{span}\{x_k(y)\}$, we only need to prove that $x_k(t,y)\in W_k(y\cdot t)$. To see this, fix $t\in\mathbb{R}$, note that both $x_k(t+s,y)$ and $x_k(s,y\cdot t)$, as functions of $s$, are nontrivial solutions of $\dot{x}=B(y\cdot (t+s))x$. Moreover, noticing $\sigma(x_k(s,y\cdot t))=k=\sigma(x_k(t+s,y))$ for all $t\in\mathbb{R}$, it then follows from Remark \ref{remark1} that there exists a real number $C\neq 0$ such that
\begin{equation}\label{relation}
x_k(t+s,y)=Cx_k(s,y\cdot t)\quad\text{for all }s\in\mathbb{R}.
\end{equation}
By letting $s=0$, we have $x_k(t,y)=Cx_k(0,y\cdot t)\in W_{k}(y\cdot t)$.

(iii) We only need to prove $x_m(y)$ is continuous with $y\in Y$ as an element of $\mathbb{R}^n$, for each $0\leq m\leq n-1$. Fix $0\leq m\leq n-1$ and let $y_k\rightarrow y$ in $Y$. Then $B(y_k\cdot t)$ converges to $B(y\cdot t)$ uniformly for $t$ in any compact interval. Given any subsequence $k'$ satisfying $x_m(y_{k'})\rightarrow x_* ( \text{as } k'\rightarrow\infty)$, by Grownwall inequality again, one has $x_m(t,y_{k'})$ converges to $x(t,y)$ uniformly for $t$ in any compact interval, where $x(t,y)$ is the solution of
\begin{equation*}
\dot{x}=B(y\cdot t)x,
\end{equation*}
satisfying $x(0,y)=x_*$, with $|x_*|=1$ and $(x_*)_1\geq 0$ (because $x_m(y_k')_1>0$ from Remark \ref{remark1}). Moreover, since $\sigma(x_m(t,y_k'))=m$ for all $t\in\mathbb{R}$, we apply Lemma \ref{DLF} to $x(t,y)$ and obtain that $\sigma(x(t,y))=m$ for all $t\in\mathbb{R}$. It then follows from the statement of uniqueness in Remark \ref{remark1} that $x(t,y)=x_m(t,y)$ for all $t\in\mathbb{R}$, which implies that $x_m(y_k')\rightarrow x_*=x(0,y)=x_m(y)$ as $k'\rightarrow\infty$. By arbitrariness of $\{k'\}$, one has $x_m(y_k)\rightarrow x_m(y)$ as $k\rightarrow\infty$. This completes the proof.
\end{proof}

\newtheorem{rk2}[decomposition]{Remark}
\begin{rk2}\label{remark2}
\emph{As a matter of fact, one can further obtain that $C=|x_k(t,y)|$ in \eqref{relation}. Indeed, since $(x_k(0,y\cdot t))_1>0$, $(x_k(0,y))_1>0$ and $x_k(t+s,y), x_k(s,y\cdot t)\in\Lambda$ for all $t,s\in\mathbb{R}$, it entails that $(x_k(t+s,y))_1, (x_k(s,y\cdot t))_1>0$ for all $s\in\mathbb{R}$, and hence $C>0$. Moreover, $C=|x_k(t,y)|$, because $|x_k(0,y\cdot t)|=1$. As a consequence,}
\emph{
\begin{equation}\label{relationco}
    x_k(s,y\cdot t)=\frac{x_k(t+s,y)}{|x_k(t,y)|}, \quad\text {for all }s\in\mathbb{R}.
\end{equation}
}
\end{rk2}

Now let us define the linear skew-product flow $\pi: \mathbb{R}\times\mathbb{R}^n\times Y\rightarrow\mathbb{R}^n\times Y$, associated with system \eqref{tri-diagonal_system} as
\begin{equation}\label{lspf}
\pi(t,x,y)=(\Phi(t,y)x,y\cdot t).
\end{equation}
For each $0\leq m\leq l\leq n-1$, we write $W_{m,l}(Y)=\cup_{y\in Y}W_{m,l}(y)\times\{y\}$ and call them \emph{Floquet bundles}. Clearly, by Proposition \ref{decomposition}, these bundles are $\pi$-invariant and $m\leq\sigma(x)\leq l$, whenever $x\in W_{m,l}(y)\cap\Lambda$, $y\in Y$. Moreover, it is easy to see that $\mathbb{R}^n\times Y=W_{0,k}(Y)\oplus W_{k+1,n-1}(Y)$, where $0\leq k\leq n-2$. In the following, we give the definition of exponential separation between invariant bundles (see \cite{JMSW,Palmer,Po} and references therein).

\newtheorem{exponential_separation}{Definition}[section]
\begin{exponential_separation}\label{des}
\emph{The ordered pair $(X_1,X_2)$ of complementary invariant subbundles of $\mathbb{R}^n\times Y$ is said to be \emph{exponentially separated} for $\pi$ if there exist positive numbers $K$ and $\nu$ such that
\begin{equation}\label{ds_condition}
\frac{|\Phi(t,y)x_2|}{|\Phi(t,y)x_1|}\leq Ke^{-\nu t},\quad t\geq 0,
\end{equation}
for all  $y\in Y$ and $x_1\in X_1(y)$, $x_2\in X_2(y)$ with $|x_1|=|x_2|=1$.}
\end{exponential_separation}

We now present the first main theorem in this section.
\newtheorem{ex}[decomposition]{Theorem}
\begin{ex}\label{dominated_splitting}
For each $0\leq m\leq n-2$, the pair of invariant subbundles $(W_{0,m}(Y),W_{m+1,n-1}(Y))$ is exponentially separated for $\pi$.
\end{ex}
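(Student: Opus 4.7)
The plan is to reduce exponential separation to three ingredients: (1) uniform transversality of the two subbundles, (2) fiberwise decay of $R(t,y,u,v):=\abs{\Phi(t,y)v}/\abs{\Phi(t,y)u}$ for unit $u\in W_{0,m}(y)$, $v\in W_{m+1,n-1}(y)$, and (3) an upgrade to uniform exponential decay via a subcocycle inequality plus compactness of $Y$. For (1), Proposition \ref{decomposition}(i) supplies $\RR^n=W_{0,m}(y)\oplus W_{m+1,n-1}(y)$ and Proposition \ref{decomposition}(iii) makes the decomposition vary continuously in $y$; compactness of $Y$ and of the two fiberwise unit spheres then yields $\alpha>0$ with $\abs{u+v}\ge\alpha$ for every admissible triple $(y,u,v)$.

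For (2) I would argue by contradiction using the integer Lyapunov function $\sigma$ together with a small-perturbation trick. Set $p(t):=\Phi(t,y)u$ and $q(t):=\Phi(t,y)v$; by Proposition \ref{decomposition}(ii) and Lemma \ref{DLF}(iii), for all sufficiently large $t$ one has $p(t)\in\Lambda$ with $\sigma(p(t))=N_u\le m$ and $q(t)\in\Lambda$ with $\sigma(q(t))=N_v\ge m+1$. Suppose, along some $t_k\to\infty$, that $\abs{q(t_k)}/\abs{p(t_k)}\to\infty$, and pass to subsequences so that $y\cdot t_k\to y_\infty\in Y$ and the normalizations $\hat{p}_k,\hat{q}_k$ converge to unit vectors $\hat{p}_\infty\in W_{0,m}(y_\infty)$, $\hat{q}_\infty\in W_{m+1,n-1}(y_\infty)$. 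Fix $\varepsilon>0$ small and consider $w_\varepsilon(t):=\Phi(t,y)(u+\varepsilon v)$. By openness of $\Lambda$ and local constancy of $\sigma$ there is $T_0>0$ with $w_\varepsilon(T_0)\in\Lambda$ and $\sigma(w_\varepsilon(T_0))=N_u\le m$, so Lemma \ref{DLF}(ii) gives $\sigma(w_\varepsilon(t))\le m$ for every $t\ge T_0$ with $w_\varepsilon(t)\in\Lambda$. On the other hand, the rescaled vectors
\[
\tilde{z}_k\,:=\,\frac{w_\varepsilon(t_k)}{\abs{q(t_k)}}\,=\,\frac{\abs{p(t_k)}}{\abs{q(t_k)}}\,\hat{p}_k\,+\,\varepsilon\,\hat{q}_k
\]
converge to $\varepsilon\hat{q}_\infty\in W_{m+1,n-1}(y_\infty)\setminus\{0\}$, and applying Lemma \ref{DLF}(iii) to the limiting equation $\dot{x}=B(y_\infty\cdot s)x$ yields some $s_0>0$ with $\Phi(s_0,y_\infty)(\varepsilon\hat{q}_\infty)\in\Lambda$ and $\sigma\ge m+1$. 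By Gronwall continuity and openness of $\Lambda$, this lifts to $\sigma(w_\varepsilon(s_0+t_k))\ge m+1$ for $k$ large, contradicting the previous upper bound. The complementary situation, where $\abs{q(t_k)}/\abs{p(t_k)}$ stays in a compact positive range, is more delicate, since the normalized limit is a \emph{hybrid} vector carrying nonzero components in both $W_{0,m}(y_\infty)$ and $W_{m+1,n-1}(y_\infty)$ on which $\sigma$ gives no direct information; I would treat it by passing to the limit configuration and using the cocycle identity $R(t_k+s)=R(t_k)\,R(s,y\cdot t_k,\hat{p}_k,\hat{q}_k)$ to force $R(\,\cdot\,,y_\infty,\hat{p}_\infty,\hat{q}_\infty)\le 1$, then iterating the preceding contradiction on successive $\omega$-limits inside $\RR^n\times Y$. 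I expect this bounded case to be the main technical obstacle, following the spirit of the relevant-sequences construction in Proposition \ref{linear_space}.

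Finally, I would upgrade fiberwise decay to uniform exponential decay. Let
\[
\rho(y,t)\,:=\,\sup\{R(t,y,u,v)\,:\,\abs{u}=\abs{v}=1,\,u\in W_{0,m}(y),\,v\in W_{m+1,n-1}(y)\}.
\]
Continuity of the Floquet bundles and compactness of their fiberwise unit spheres make $\rho(\cdot,t)$ continuous on $Y$ for each $t\ge 0$, while the cocycle identity $\Phi(t+s,y)=\Phi(t,y\cdot s)\Phi(s,y)$ combined with Proposition \ref{decomposition}(ii) yields the subcocycle estimate $\rho(y,t+s)\le\rho(y,s)\,\rho(y\cdot s,t)$. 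Fiberwise decay from (2), combined with compactness of $Y$ by a standard finite-cover argument, produces $T>0$ with $\sup_{y\in Y}\rho(y,T)<1$; iterating the subcocycle inequality along $y,y\cdot T,y\cdot 2T,\ldots$ then yields constants $K,\nu>0$ with $\rho(y,t)\le Ke^{-\nu t}$ for every $(y,t)\in Y\times[0,\infty)$, which is exactly Definition \ref{des}.
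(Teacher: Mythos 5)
Your three-step plan (uniform transversality, fiberwise decay, subcocycle-plus-compactness upgrade) is a legitimate alternative route to exponential separation, and steps (1) and (3) are sound as sketched. The paper instead goes through the projectivized flow: it shows $\mathbb{P}W_{0,m}$ is a uniformly positive attractor inside a small cone neighborhood and then invokes Bronshtein's criterion (Lemma \ref{transfer}), which packages the subcocycle-to-exponential-decay upgrade you spell out by hand in step (3). So the outer scaffolding is genuinely different, but both are standard and both reduce the problem to the same core fiberwise statement.

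The genuine gap is exactly the one you flag yourself: your contradiction argument in step (2) only disposes of the case $\abs{q(t_k)}/\abs{p(t_k)}\to\infty$, and the ``bounded ratio'' case is left as a vague plan about iterating on $\omega$-limits. That plan, as written, does not close the argument. The missing idea is to look at the full limiting vector of the normalized solution $w_\eps(t_k)/\abs{q(t_k)}$ (or equivalently $w_\eps(t_k)/\abs{w_\eps(t_k)}$) and feed it into Lemma \ref{tech}. Concretely: along a further subsequence, $w_\eps(t_k)/\abs{q(t_k)}\to x_*=d\,\hat{p}_\infty+\eps\,\hat{q}_\infty$ with $d\geq 0$ bounded; transversality of $W_{0,m}(y_\infty)$ and $W_{m+1,n-1}(y_\infty)$ guarantees $x_*\neq 0$. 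Lemma \ref{tech} (applied to the rescaled, time-shifted solutions, using $B(y\cdot(t+t_k))\to B(y_\infty\cdot t)$ on compacts) shows $\Phi(t,y_\infty)x_*\in\Lambda$ with $\sigma(\Phi(t,y_\infty)x_*)\equiv\sigma(x_*)$ constant for all $t$, and this constant equals $\lim_k\sigma(w_\eps(t_k))\leq m$ by the first half of your argument. But by Proposition \ref{linear_space}, a vector whose trajectory has constant $\sigma$-value $m^*$ lies in the single Floquet fiber $W_{m^*}(y_\infty)$, hence here in $W_{0,m}(y_\infty)$; so the $W_{m+1,n-1}(y_\infty)$-component of $x_*$, namely $\eps\,\hat{q}_\infty$, must vanish --- a contradiction since $\eps>0$ and $\abs{\hat{q}_\infty}=1$. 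This is precisely the mechanism the paper uses (there it appears as $\widetilde{\Pi}_m(y_*)x_*=0$, forced by $\sigma(x_*)\leq m$, clashing with the cone lower bound $\abs{\widetilde{\Pi}_m(y_*)x_*}\geq\eps_0\abs{\Pi_m(y_*)x_*}$). Without this use of Lemma \ref{tech} together with the constant-$\sigma$ characterization of the Floquet fibers, the bounded case --- which you correctly identify as the heart of the matter --- remains open.
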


Before we start to prove this Theorem, we need some basic conceptions and definitions.

Given a pair of $(X,\widetilde{X})$ of complementary invariant subbundles of $\mathbb{R}^n\times Y$. For any $y\in Y$, denote by $\Pi(y)$ (resp. $\widetilde{\Pi}(y)$) the projection of $\mathbb{R}^n$ on $X(y)$ along $\widetilde{X}(y)$ (resp. on $\widetilde{X}(y)$ along $X(y)$). In particular, for each $0\leq m\leq n-2$ and the bundle pair $(W_{0,m}(Y),W_{m+1,n-1}(Y))$, we write as $\Pi_m(y)$ (resp. $\widetilde{\Pi}_m(y)$) the projection mapping $\Pi(y)$ (resp. $\widetilde{\Pi}(y)$).

For any $x_1,x_2\in\mathbb{R}^n\backslash\{0\}$, define the equivalence relation $x_1\sim x_2$ if and only if $x_1=\alpha x_2$ for some $\alpha\in\mathbb{R}$ with $\alpha\neq0$. The equivalence class of $x$ will be denoted by $[x]$. Then the linear skew-product flow $\pi$ on $\mathbb{R}^n\times Y$ induces in a natural way a projective flow $\mathbb{P}\pi:\mathbb{R}\times RP^{n-1}\times Y\rightarrow RP^{n-1}\times Y$ as
\begin{equation*}
(t,[x],y)\mapsto ([\Phi(t,y)x],y\cdot t),
\end{equation*}
where $RP^{n-1}$ is the real $(n-1)$-dimensional projective space (see e.g. \cite{Selgrade}).

Let $M\subset RP^{n-1}\times Y$ be a closed invariant subset of $\mathbb{P}\pi$. $M$ is called a \emph{uniformly positive attractor} if it has a neighborhood $U_0$ (called attracting neighborhood) such that, for any neighborhood $V$ of $M$, there is a $T>0$ such that $\mathbb{P}\pi(t,U_0)\subset V$ for all $t>T$.

If $W$ is a vector subbundle of $\mathbb{R}^n\times Y$, then we denote by $\mathbb{P}W$ the projective subbundle associated with $W$. Moreover, the cone of angle $h>0$ about $W$ is the set
\[K(W,h)=\{(x,y)\in\mathbb{R}^n\times Y\ |\ |\widetilde{\Pi}(y)x|\leq h|\Pi(y)x|\}.\]
If we put $\mathbb{P}K(\mathbb{P}W,h)=\{([x],y)\in RP^{n-1}\times Y\ |\ (x,y)\in K(W,h),|x|\neq 0\}$, then $\{\mathbb{P}K(\mathbb{P}W,h): h>0\}$ is a base of the neighborhoods of $\mathbb{P}W$ in $RP^{n-1}\times Y$ (See \cite{Bronshtein}).

\newtheorem{equre}[decomposition]{Lemma}
\begin{equre}\label{transfer}
Let $\pi:\mathbb{R}\times\mathbb{R}^n\times Y\rightarrow\mathbb{R}^n\times Y$ be the skew-product flow defined in \eqref{lspf}. The ordered pair $(X,\widetilde{X})$ of complementary invariant subbundles of $\mathbb{R}^n\times Y$ is exponentially separated if and only if $\mathbb{P}X$ is a uniformly positive attractor for the flow $\mathbb{P}\pi$ on $RP^{n-1}\times Y$.
\end{equre}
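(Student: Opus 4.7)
The plan is to prove the two implications separately, exploiting the fact that the cones $\{\mathbb{P}K(\mathbb{P}X, h) : h > 0\}$ form a neighborhood base of $\mathbb{P}X$. The basic ingredient shared by both directions is the invariance identity
\[\Pi(y \cdot t)\Phi(t, y) = \Phi(t, y)\Pi(y),\qquad \widetilde{\Pi}(y \cdot t)\Phi(t, y) = \Phi(t, y)\widetilde{\Pi}(y),\]
which follows at once from $\Phi(t, y)X(y) = X(y \cdot t)$, $\Phi(t, y)\widetilde{X}(y) = \widetilde{X}(y \cdot t)$, and the uniqueness of the direct-sum decomposition at $y \cdot t$.

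For the forward implication, I fix any $h_0 > 0$ and propose $U_0 = \mathbb{P}K(\mathbb{P}X, h_0)$ as the attracting neighborhood. For $([x], y) \in U_0$ with $\widetilde{\Pi}(y)x \ne 0$ (the other case being trivial by invariance), set $u = \Pi(y)x/|\Pi(y)x|$ and $v = \widetilde{\Pi}(y)x/|\widetilde{\Pi}(y)x|$, and apply \eqref{ds_condition} together with the invariance identity to obtain
\[\frac{|\widetilde{\Pi}(y \cdot t)\Phi(t, y)x|}{|\Pi(y \cdot t)\Phi(t, y)x|} = \frac{|\widetilde{\Pi}(y)x|}{|\Pi(y)x|}\cdot\frac{|\Phi(t, y)v|}{|\Phi(t, y)u|} \le K h_0 e^{-\nu t}.\]
Since every neighborhood $V$ of $\mathbb{P}X$ contains some cone $\mathbb{P}K(\mathbb{P}X, h)$, choosing $T$ large enough that $K h_0 e^{-\nu T} \le h$ yields $\mathbb{P}\pi(t, U_0) \subset V$ for all $t > T$.

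For the converse, pick $h_0 > 0$ with $\mathbb{P}K(\mathbb{P}X, h_0) \subset U_0$, and apply the attractor property to $V = \mathbb{P}K(\mathbb{P}X, h_0/2)$ to obtain $T > 0$ such that
\[\mathbb{P}\pi(T, \mathbb{P}K(\mathbb{P}X, h_0)) \subset \mathbb{P}K(\mathbb{P}X, h_0/2).\]
For arbitrary unit vectors $u \in X(y)$, $v \in \widetilde{X}(y)$, the point $[u + h_0 v]$ lies in $\mathbb{P}K(\mathbb{P}X, h_0)$, and the above inclusion together with the invariance identity gives $|\Phi(T, y)v|/|\Phi(T, y)u| \le 1/2$. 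Iterating this fibrewise bound via the cocycle identity along the orbit $y, y \cdot T, y \cdot 2T, \ldots$ (unit-normalizing the subbundle iterates at each step and re-applying the estimate at the new base point) produces $|\Phi(nT, y)v|/|\Phi(nT, y)u| \le 2^{-n}$ for every $n \ge 0$ and every such $(u, v, y)$.

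To upgrade this discrete-time estimate to genuine exponential separation, I invoke compactness. The set $\Sigma = \{(u, v, y) : y \in Y,\ u \in X(y),\ v \in \widetilde{X}(y),\ |u| = |v| = 1\}$ is compact by continuity of the two invariant subbundles over the compact base $Y$, and the continuous function $(s, u, v, y) \mapsto |\Phi(s, y)v|/|\Phi(s, y)u|$ is well-defined and bounded on $[0, T] \times \Sigma$ by some constant $M$. Writing $t = nT + s$ with $s \in [0, T)$ and splitting through the cocycle then gives $|\Phi(t, y)v|/|\Phi(t, y)u| \le M \cdot 2^{-n} \le 2M \cdot 2^{-t/T}$, which is exactly \eqref{ds_condition} with $K = 2M$ and $\nu = (\log 2)/T$. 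The main technical obstacle is this final compactness-based boundedness step, which relies crucially on the continuous variation of the complementary subbundles over $Y$; once that is in hand, the remainder is a clean geometric iteration inside the cones.
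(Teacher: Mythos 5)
Your proof is correct, but note that the paper does not actually prove this lemma---it simply cites Lemma~3 of Bronshtein and Chernii \cite{Bronshtein}---so you have supplied an argument the paper leaves to the reference. Your approach is the standard one: use the commutation identity $\Pi(y\cdot t)\Phi(t,y)=\Phi(t,y)\Pi(y)$ (and its analogue for $\widetilde{\Pi}$), exploit that the projective cones $\mathbb{P}K(\mathbb{P}X,h)$ form a neighborhood base of $\mathbb{P}X$, and translate the exponential ratio estimate \eqref{ds_condition} into cone-shrinking (forward direction) and cone-shrinking after a fixed time $T$ into a discrete geometric decay which is then upgraded to continuous exponential decay via compactness of the unit-sphere bundle $\Sigma$ and continuity of $(s,u,v,y)\mapsto|\Phi(s,y)v|/|\Phi(s,y)u|$ on $[0,T]\times\Sigma$ (converse direction). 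Both the identification $[u+h_0v]\in\mathbb{P}K(\mathbb{P}X,h_0)$ and the renormalize-and-iterate step via the cocycle property $\Phi(nT,y)=\Phi(T,y\cdot(n-1)T)\cdots\Phi(T,y)$ are carried out correctly, as is the handling of the degenerate case $\widetilde{\Pi}(y)x=0$. The only implicit hypothesis you rely on---the continuous dependence of the projections $\Pi(y)$, $\widetilde{\Pi}(y)$ on $y$---is indeed part of what ``complementary invariant subbundles'' means, so nothing is missing. In short, the proposal is a correct, self-contained reconstruction of the cited result.
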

\begin{proof}
See Lemma 3 in \cite{Bronshtein}.
\end{proof}

The following lemma shows the uniqueness of exponential separation.

\newtheorem{p1}[decomposition]{Lemma}
\begin{p1}\label{uniqueness}
If the ordered pairs $(X_1,\widetilde{X}_1)$ and $(X_2,\widetilde{X}_2)$ of complementary invariant subbundles of $\mathbb{R}^n\times Y$ are exponentially separated and $\dim(\widetilde{X}_1)=\dim(\widetilde{X}_2)$. Then
\begin{equation*}
    X_1=X_2\quad \text{ and }\quad \widetilde{X}_1=\widetilde{X}_2.
\end{equation*}
\end{p1}

\begin{proof}
See \cite[Lemma A.4]{M}.
\end{proof}

\begin{proof}[Proof of Theorem \ref{dominated_splitting}]
Take any $([x_0],y_0)\in\mathbb{P}W_{0,m}$. It follows from Lemma \ref{DLF} that there is a $\tau>0$ such that $\Phi(\tau,y_0)x_0\in\Lambda$ and $\sigma(\Phi(\tau,y_0)x_0)\leq m$. So, one can find a neighborhood $V$ of $(x_0,y_0)$, with its closure $\overline{V}\subset (\mathbb{R}^n\backslash\{0\})\times Y$, such that $\Phi(\tau,y)x\in\Lambda$ and $\sigma(\Phi(\tau,y)x)\leq m$ for all $(x,y)\in V$. Moreover, one has
\[\lim_{t\rightarrow\infty}\sigma(\Phi(t,y)z)\leq m,\quad\text{ for all }z\in[x] \text{ and } ([x],y)\in \mathbb{P}V.\]
By compactness of $\mathbb{P}W_{0,m}$, we collect a finite neighborhoods $\{\mathbb{P}V_i\}_{i=1}^{k}$ covering $\mathbb{P}W_{0,m}$ in $RP^{n-1}\times Y$, and denote their union by $\mathcal{V}=\bigcup_{i=1}^{k}\mathbb{P}V_i$. Then
\begin{equation}\label{union}
\lim_{t\rightarrow\infty}\sigma(\Phi(t,y)z)\leq m, \quad \text{ for all } z\in[x] \text{ and } ([x],y)\in\mathcal{V}.
\end{equation}
Since $\{\mathbb{P}K(\mathbb{P}W_{0,m},h)\ |\ h>0\}$ is a base of the neighborhoods of $\mathbb{P}W_{0,m}$, one can choose some small $h_0>0$ such that
\[\mathbb{P}K(\mathbb{P}W_{0,m},h_0)\subset\mathcal{V},\]
and hence \eqref{union} is satisfied for all $z\in[x]$ and $([x],y)\in\mathbb{P}K(\mathbb{P}W_{0,m},h_0)$.

We claim that $\mathbb{P}K(\mathbb{P}W_{0,m},h_0)$ is attracting neighborhood of $\mathbb{P}W_{0,m}$. In fact, by \cite[Lemma 1]{Bronshtein}, one only needs to show that given any $([x],y)\in \mathbb{P}K(\mathbb{P}W_{0,m},h_0)$ and any $\varepsilon>0$, there is a $T=T([x],y,\varepsilon)>0$ such that
\[(\Phi(t,y)x,y\cdot t)\in K(W_{0,m},\varepsilon),\quad\text{ for all } t>T.\]
To this end, suppose that there are some $([x_0],y_0)\in\mathbb{P}K(\mathbb{P}W_{0,m},h_0)$, $\varepsilon_0>0$ and $t_n\rightarrow\infty$, satisfying
\begin{equation}\label{contra}
|\widetilde{\Pi}_m(y_0\cdot t_n)\Phi(t_n,y_0)x_0|\geq\varepsilon_0 |\Pi_{m}(y_0\cdot t_n)\Phi(t_n,y_0)x_0|.
\end{equation}
Without loss of generality, we may assume that $y_0\cdot t_n\rightarrow y_*$ and $\frac{\Phi(t_n,y_0)x_0}{|\Phi(t_n,y_0)x_0|}\rightarrow x_*\neq 0$. Similarly as the argument in the proof of Lemma \ref{tech}, we obtain that $\sigma(\Phi(t,y_*)x_*)\equiv m^*$ for all $t\in\mathbb{R}$, which implies that $\sigma(x_*)=m^*$. Noting that $\sigma(x_*)=\lim_{n\rightarrow\infty}\sigma(\Phi(t_n,y_0)x_0)$, \eqref{union} implies that $m^*\leq m$, and hence, $\widetilde{\Pi}_m(y_*)x_*=0$.

On the other hand, by letting $t_n\rightarrow\infty$ in \eqref{contra}, it yields that
\[|\widetilde{\Pi}_m(y_*)x_*|\geq \varepsilon_0|\Pi_m(y_*)x_*|.\]
It then entails that $\Pi_m(y_*)x_*=\widetilde{\Pi}_m(y_*)x_*=0$, which means that $x_*=0$, a contradiction. Thus we have completed the proof of the claim. Consequently, $\mathbb{P}W_{0,m}$ is a uniformly positive attractor of the flow $\mathbb{P}\pi$. Lemma \ref{transfer} then implies that the $(W_{0,m}(Y),W_{m+1,n-1}(Y))$ is exponentially separated.
\end{proof}

Now we go back to the parameterized  linear equation \eqref{tri-diagonal_system}. Fix $y\in Y$ and consider the vectors $x_0(y),\cdots,x_{n-1}(y)$ in Proposition \ref{decomposition}. Then it is not difficult to see that their corresponding solutions $x_0(t,y)\cdots,x_{n-1}(t,y)$ are linearly independent. Thus for any solution $x(t,y)$  of equation \eqref{tri-diagonal_system}, there exist constants $\hat{c}_0,\cdots,\hat{c}_{n-1}$ such that
\begin{equation*}
x(t,y)=\hat{c}_0x_0(t,y)+\cdots+\hat{c}_{n-1}x_{n-1}(t,y),\quad\text{for all }t\in\mathbb{R}.
\end{equation*}
On the other hand, by making use of (i) of Proposition \ref{decomposition} for $y\cdot t$, one can also obtain functions $c_0(t),\cdots,c_{n-1}(t)$ such that
\begin{equation*}
x(t,y)=c_0(t)x_0(y\cdot t)+\cdots+c_{n-1}(t)x_{n-1}(y\cdot t).
\end{equation*}
It then follows from \eqref{relationco} in Remark \ref{remark2} that
\begin{equation}\label{exp3}
c_m(t)=\hat{c}_m|x_m(t,y)|, \quad\text{for } \quad m=0,\cdots,n-1.
\end{equation}
A direct calculation yields that
\begin{equation*}
\frac{d}{dt}|x_m(t,y)|=\frac{x_m^T(t,y)B(y\cdot t)x_m(t,y)}{|x_m(t,y)|},
\end{equation*}
and hence, by \eqref{exp3},
\begin{equation}\label{sepeq}
\dot{c}_m(t)=\lambda_m(y\cdot t)c_m(t),
\end{equation}
where,
\[\lambda_m(y\cdot{}t)=\frac{x_m^T(t,y)B(y\cdot t)x_m(t,y)}
{|x_m(t,y)|^2}.\]
Clearly, $\lambda_m(y)$ is continuous on $Y$ for each $0\leq m\leq n-1$.  As a consequence, we have decoupled \eqref{tri-diagonal_system} into system of 1-D equations \eqref{sepeq}.  Moreover, by Theorem \ref{dominated_splitting}, we have following estimate of the growth rate of such linear equations.

\newtheorem{raterel}[decomposition]{Corollary}
\begin{raterel}\label{raterelation}
Consider the linear skew product flow $\pi$ defined in \eqref{lspf}. Then there exist constants $\beta\geq 0$ and $\gamma>0$ such that
\begin{equation*}\label{integralsep}
    \int_{s}^{t}\lambda_{m+1}(y\cdot \tau)-\lambda_m(y\cdot \tau)d\tau\leq -\gamma (t-s)+\beta
\end{equation*}
for any $s\leq t$ and $m=0,\cdots,n-2$.
\end{raterel}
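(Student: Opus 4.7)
The plan is to reduce the corollary directly to the exponential separation already proved in Theorem \ref{dominated_splitting}, using the fact that each Floquet solution $x_m(t,y)$ has its norm governed by the scalar ODE driven by $\lambda_m$.

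First I would observe that the Floquet solutions themselves satisfy a one-dimensional equation for their norms. Differentiating $|x_m(t,y)|^2 = x_m(t,y)^T x_m(t,y)$ and using the definition of $\lambda_m(y\cdot t)$ immediately gives
\begin{equation*}
\frac{d}{dt}|x_m(t,y)| = \lambda_m(y\cdot t)\,|x_m(t,y)|.
\end{equation*}
Since $|x_m(0,y)|=1$ by Remark \ref{remark1}, integrating yields $|x_m(t,y)| = \exp\!\left(\int_0^t \lambda_m(y\cdot \tau)\,d\tau\right)$. Now I would replace the base point $y$ by $y\cdot s$ and the time $t$ by $t-s$; the cocycle property of the shift $y\cdot t$ converts the integral into one running from $s$ to $t$, giving
\begin{equation*}
|x_m(t-s,\,y\cdot s)| = \exp\!\left(\int_s^t \lambda_m(y\cdot \tau)\,d\tau\right),
\end{equation*}
and similarly for $m+1$.

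Next I would apply Theorem \ref{dominated_splitting} at the base point $y\cdot s$. The unit vectors $x_m(y\cdot s)$ and $x_{m+1}(y\cdot s)$ lie in $W_m(y\cdot s)\subset W_{0,m}(y\cdot s)$ and in $W_{m+1}(y\cdot s)\subset W_{m+1,n-1}(y\cdot s)$ respectively, so the exponential separation constants $K,\nu>0$ from Definition \ref{des} apply uniformly in $y$ and $s$ and yield
\begin{equation*}
\frac{|\Phi(t-s,\,y\cdot s)\,x_{m+1}(y\cdot s)|}{|\Phi(t-s,\,y\cdot s)\,x_m(y\cdot s)|} = \frac{|x_{m+1}(t-s,\,y\cdot s)|}{|x_m(t-s,\,y\cdot s)|} \leq Ke^{-\nu(t-s)}
\end{equation*}
for all $t\geq s$.

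Finally I would combine the two displays: substituting the norm formulas into the separation bound gives
\begin{equation*}
\exp\!\left(\int_s^t \bigl(\lambda_{m+1}(y\cdot \tau) - \lambda_m(y\cdot \tau)\bigr)d\tau\right) \leq Ke^{-\nu(t-s)},
\end{equation*}
and taking logarithms yields the claim with $\gamma=\nu$ and $\beta=\max\{\ln K,0\}$. The proof is essentially a bookkeeping translation, so I do not expect any real obstacle; the only mild care needed is to apply the exponential separation at the shifted base point $y\cdot s$ so that the constants $K,\nu$ (which are uniform in $y\in Y$ by Theorem \ref{dominated_splitting}) produce a bound that is uniform in $s$ as well.
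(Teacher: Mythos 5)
Your proposal is correct and follows essentially the same route as the paper, which just cites the exponential separation inequality \eqref{ds_condition} together with the scalar relations \eqref{exp3} and \eqref{sepeq}; your explicit integration $|x_m(t,y)|=\exp\bigl(\int_0^t\lambda_m(y\cdot\tau)\,d\tau\bigr)$ and the shift of the base point to $y\cdot s$ simply spell out the ``direct corollary'' the paper leaves implicit. The only small point left unstated is that the constants $K,\nu$ from Theorem \ref{dominated_splitting} a priori depend on $m$; since $m$ runs over the finite set $\{0,\dots,n-2\}$, take $K=\max_m K_m$ and $\nu=\min_m\nu_m$ to obtain $\beta,\gamma$ uniform in $m$ as the statement requires.
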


\begin{proof}
This is a direct corollary from \eqref{ds_condition}, \eqref{exp3} and \eqref{sepeq}.
\end{proof}

We end this section by constructing the relation between the Floquet bundles and the Sacker-Sell spectral bundles of \eqref{tri-diagonal_system}. Recall that $\pi$ is the linear skew-product flow in \eqref{lspf}. Let $\lambda\in\mathbb{R}$ and define $\pi_{\lambda}:\mathbb{R}\times\mathbb{R}^n\times Y\rightarrow\mathbb{R}^n\times Y$ by
\begin{equation}\label{distort}
\pi_{\lambda}(t,x,y)=(\Phi_{\lambda}(t,y)x,y\cdot t),
\end{equation}
where $\Phi_{\lambda}(t,y)=e^{-\lambda t}\Phi(t,y)$.
It is easy to verify that $\pi_{\lambda}$ is also a linear skew-product flow on $\mathbb{R}^n\times Y$. We say $\pi_{\lambda}$ admits an \emph{exponential dichotomy over $Y$} if there is an invariant projector $Q:\mathbb{R}^n\times Y\rightarrow\mathbb{R}^n\times Y$, i.e. $\Phi_{\lambda}(t,y)Q(y)=Q(y\cdot t)\Phi_{\lambda}(t,y)$ and positive constants $K$ and $\alpha$ such that for all $y\in Y$,
\begin{eqnarray*}
|\Phi_{\lambda}(t,y)(1-Q(y))|&\leq& Ke^{-\alpha t},\quad t\geq 0,\\
|\Phi_{\lambda}(t,y)Q(y)|&\leq& Ke^{\alpha t},\quad t\leq 0.
\end{eqnarray*}
$\sum(Y)=\{\lambda\in\mathbb{R}\ |\ \eqref{distort}\text{ has no Exponential Dichotomy over } Y\}$ is called the \emph{Sacker-Sell spectrum} of \eqref{tri-diagonal_system} (or \eqref{lspf}) on $Y$. Further, if $Y$ is connected then its Sacker-Sell spectrum $\sum(Y)$ is of the form (See \cite{SS,SS2}): $\sum(Y)=\bigcup_{i=0}^{l-1}[a_i,b_i]$, where $[a_i,b_i]$ are intervals and they are ordered from right to left, that is, $a_{l-1}\leq b_{l-1}<a_{l-2}\leq b_{l-2}<\cdots<a_0\leq b_0$. We hereafter denote by $V_i$ the associated \emph{spectral bundle} corresponds to the spectrum interval $[a_i,b_i]$, for $i=0,\cdots,l-1$, that is,
\begin{eqnarray*}
    V_i(Y)=\{(x,y)\in\mathbb{R}^n\times Y &|& |\Phi(t,y)x|=o(e^{a_i^-t})\text{ as }t\rightarrow -\infty,\\
    && |\Phi(t,y)x|=o(e^{b_i^+t}) \text{ as }t\rightarrow\infty\},
\end{eqnarray*}
where $a_i^-$, $b_i^+$ are any numbers such that $a_i^-<a_i\leq b_i<b_i^+$. With these notations we present a more delicate decomposition of $V_i(y)$.

\newtheorem{decom2}[decomposition]{Corollary}
\begin{decom2}\label{decom2}
Fix $0\leq i\leq l-1$. Then
\begin{equation*}
V_i(Y)= W_{N+1}(Y)\oplus\cdots\oplus W_{N+M}(Y),
\end{equation*}
where $N=\dim (V_0(Y)\oplus\cdots\oplus V_{i-1}(Y))-1$, $M=\dim V_i(Y)$.
\end{decom2}

\begin{proof}
Fix $0\leq i\leq l-1$. Note that the ordered spectral bundle pair $(V_{0}(Y)\oplus\cdots\oplus V_{i-1}(Y), V_{i}(Y)\oplus\cdots\oplus V_{l-1}(Y))$ is exponentially separated over $Y$, with $\dim (V_0(Y)\oplus\cdots\oplus V_{i-1}(Y))=N+1$. On the other hand, by Theorem \ref{dominated_splitting}, the ordered pair $(W_{0,N}(Y),W_{N+1,n-1}(Y))$ is also exponential separated. By uniqueness (see Lemma \ref{uniqueness}), we obtain that $V_0(Y)\oplus\cdots\oplus V_{i-1}(Y)=W_{0,N}(Y)$ and $V_i(Y)\oplus\cdots\oplus V_{l-1}(Y)=W_{N+1,n-1}(Y)$. Similarly, we can also show that $V_0(Y)\oplus\cdots\oplus V_{i}(Y)=W_{0,N+M}(Y)$ and $V_{i+1}(Y)\oplus\cdots\oplus V_{l-1}(Y)=W_{N+M+1,n-1}(Y)$. Consequently, $V_i(Y)=(V_0(Y)\oplus\cdots\oplus V_i(Y))\cap(V_i(Y)\oplus\cdots\oplus V_{n-1}(Y))=W_{0,N+M}(Y)\cap W_{N+1,n-1}(Y)=W_{N+1}(Y)\oplus\cdots\oplus W_{N+M}(Y)$.
\end{proof}

When there exists $1\le k\le l-1$ such that $b_k<0<a_{k-1}$, then $\pi$ itself admits an exponential dichotomy over $Y$. Let $V^u(y)=V_0(y)\oplus\cdots\oplus V_{k}(y)$, $V^s(y)=V_{k+1}(y)\oplus\cdots\oplus V_{l-1}(y)$. $V^u(y)$ and $V^s(y)$ are called \emph{unstable space} and \emph{stable spaces} of \eqref{tri-diagonal_system} at $y\in Y$, respectively. By virtue of Corollary \ref{decom2}, one has
\begin{equation}\label{dimension}
\begin{split}
\sigma(x)&\leq \dim V^u(y)-1,\quad  \text{for } x\in V^u(y)\cap\Lambda,\\
\sigma(x)&\geq \dim V^u(y), \quad   \text{for } x\in V^s(y)\cap\Lambda,
\end{split}
\end{equation}
 for each $y\in Y$.

\vskip 2mm
Finally, we give the following Lemma due to the continuity of $V^{s,u}(y)$ with respect to $y$, which will be useful in the next section.
\newtheorem{caplinear}[decomposition]{Lemma}
\begin{caplinear}\label{capl}
 Take any $y_1, y_2\in Y$. If the distance $d(y_1,y_2)$ is sufficiently small, then $V^{s,u}(y_1)\oplus V^{u,s}(y_2)=\mathbb{R}^n$.
\end{caplinear}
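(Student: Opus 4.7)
The plan is to reformulate the direct-sum condition as the non-vanishing of a determinant whose entries depend continuously on $(y_1,y_2)$, and then invoke compactness of the diagonal in $Y\times Y$.

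First, by Corollary \ref{decom2}, both $V^u(y)$ and $V^s(y)$ decompose as direct sums of Floquet line bundles: write
\[
V^u(y)=W_0(y)\oplus\cdots\oplus W_{k}(y),\qquad V^s(y)=W_{k+1}(y)\oplus\cdots\oplus W_{n-1}(y),
\]
for the appropriate $k$ (the dimensions of $V^{u,s}$ are locally constant in $y$, which is all that is needed since the statement concerns nearby points). By Proposition \ref{decomposition}(iii) and Remark \ref{remark1}, each Floquet line $W_m(y)$ is spanned by a unit vector $x_m(y)\in\mathbb{R}^n$ depending continuously on $y\in Y$.

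Next, for the case $V^s(y_1)\oplus V^u(y_2)$, assemble the matrix
\[
M(y_1,y_2)=\bigl[x_{k+1}(y_1),\ldots,x_{n-1}(y_1),\,x_0(y_2),\ldots,x_k(y_2)\bigr]\in\mathbb{R}^{n\times n}.
\]
Its columns are a basis of the sum $V^s(y_1)+V^u(y_2)$, so the splitting $V^s(y_1)\oplus V^u(y_2)=\mathbb{R}^n$ is equivalent to $\det M(y_1,y_2)\neq 0$. Continuity of each $x_m$ makes $(y_1,y_2)\mapsto\det M(y_1,y_2)$ continuous on $Y\times Y$. When $y_1=y_2=y$, the columns of $M(y,y)$ are exactly $\{x_0(y),\ldots,x_{n-1}(y)\}$ reordered; these form a basis of $\mathbb{R}^n$ by Proposition \ref{decomposition}(i), so $\det M(y,y)\neq 0$ for every $y\in Y$.

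Finally, the set $\mathcal{O}=\{(y_1,y_2)\in Y\times Y:\det M(y_1,y_2)\neq 0\}$ is open and contains the diagonal $\Delta=\{(y,y):y\in Y\}$, which is compact. A standard tube-neighborhood argument then yields $\delta>0$ with $d(y_1,y_2)<\delta\Rightarrow (y_1,y_2)\in\mathcal{O}$, giving $V^s(y_1)\oplus V^u(y_2)=\mathbb{R}^n$. The symmetric matrix (swapping the roles of $y_1$ and $y_2$) handles the case $V^u(y_1)\oplus V^s(y_2)=\mathbb{R}^n$, and taking the smaller of the two $\delta$'s finishes the proof. There is no serious obstacle here: the entire argument is driven by continuity of the Floquet bundles and compactness of $\Delta$; the only mild care needed is to invoke Corollary \ref{decom2} to align the spectral decomposition with the Floquet decomposition so that the continuous global frame $\{x_m(y)\}$ is available.
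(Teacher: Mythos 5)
Your proof is correct, but it takes a genuinely different route from the paper's. The paper argues directly from the exponential dichotomy: the projection $Q(y)$ onto $V^u(y)$ along $V^s(y)$ is continuous on $Y$, so the subspaces $V^{u,s}(y)$ vary continuously; the proof then proceeds by contradiction, extracting unit vectors $w_n\in V^s(y_1^n)\cap V^u(y_2^n)$ for sequences with $d(y_1^n,y_2^n)\to 0$ and passing to a limit $w\in V^s(y_*)\cap V^u(y_*)\setminus\{0\}$, which is impossible; the direct-sum conclusion then follows by dimension counting. You instead invoke Corollary \ref{decom2} to decompose $V^u(y)$ and $V^s(y)$ into Floquet line bundles, use Proposition \ref{decomposition}(iii) to get a continuous frame $\{x_m(y)\}$, and reformulate the transversality as non-vanishing of an $n\times n$ determinant, then apply a tube-neighborhood argument around the diagonal. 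The two proofs hinge on the same principle (continuity of the spectral subspaces plus compactness of $Y$), but the paper's version is lighter: it needs only the continuity of the dichotomy projector, a standard fact independent of the Floquet theory built up earlier, whereas yours invokes the full Floquet bundle decomposition. On the other hand, your determinant formulation makes the openness of the transversality condition completely explicit and perhaps more quantitative. One small caution: as you note, $M(y_1,y_2)$ is only a square matrix when $\dim V^s(y_1)+\dim V^u(y_2)=n$; this is guaranteed near the diagonal by local constancy of the projector rank, but it means $\det M$ is defined only on a neighborhood of $\Delta$ in $Y\times Y$, which is still enough for the tube argument. Both arguments are valid.
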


\begin{proof}
Let $Q(y)$ be the projections on $V^u(y)$ along $V^s(y)$. Since $Q$ is continuous about $y$, $V^u(y)=Q(y)\mathbb{R}^n, V^s(y)=(1-Q(y))\mathbb{R}^n$ vary continuously with respect to $y$. Suppose that there are two sequence $\{y_1^n\},\{y_2^n\}
\subset Y$, satisfying $d(y_1^n,y_2^n)\to 0$ as $n\to \infty$, such that $V^s(y_1^n)\cap V^u(y_2^n)\ne\{0\}$ for any $n$. Without loss of generality, one may assume that $y_i^n\to y_*$ as $n\to \infty$ for any $i=1,2$. Then we can choose unit vectors $w_n\in V^s(y_1^n)\cap V^u(y_2^n)$. By letting $n\to \infty$, the continuity of $V^{u,s}(y)$ with respect to $y$ implies that there is a unit vector $w\in V^s(y_*)\cap V^u(y_*)=\emptyset$, a contradiction. Thus, we have proved that, for $y_1, y_2\in Y$ with $d(y_1,y_2)$ sufficiently small, $V^s(y_1)\cap V^u(y_2)=\{0\}$. Note also that $V^s(y_2)\oplus V^u(y_2)=\mathbb{R}^n$ and $\dim V^u(y_2)=\dim V^u(y_1)$, we obtain that $V^s(y_1)\oplus V^u(y_2)=\mathbb{R}^n$. Similarly, one can prove $V^u(y_1)\oplus V^s(y_2)=\mathbb{R}^n$ provided that $d(y_1,y_2)$ sufficiently small.
\end{proof}

\section{Nonlinear cooperative-competitive tridiagonal systems}\label{application}

In this section, we will apply the Floquet theory obtained in the previous sections to investigate the lifting property of $\omega$-limit sets of the nonautonomous tridiagonal system \eqref{tri-equation-f1}+\eqref{assumption}.

As we mentioned in the introduction, system \eqref{tri-equation-f1}+\eqref{assumption} can be embedded into
a skew-product flow $\Pi_t:\RR^n\times H(f)\to \RR^n\times H(f),$
\begin{equation}\label{skew-fl}
\Pi_t(x_0,g)\mapsto (x(t,x_0,g),g\cdot t),
\end{equation}
 where $x(t,x_0,g)$ is the solution of
\begin{equation}\label{tri-equations-g}
\begin{split}
\dot{x}_1 &=g_1(t,x_1,x_2),\\
\dot{x}_i &=g_i(t,x_{i-1},x_i,x_{i+1}),\quad 2\leq i\leq n-1;\\
\dot{x}_n &=g_n(t,x_{n-1},x_n),\\
\end{split}
\end{equation}
with $x(0;x_0,g)=x_0\in \RR^n$, and $g=(g_1,\cdots,g_n)\in H(f)$. It is also
easy to check that the following two properties:

\vskip 3mm \noindent {\bf (G1)} $\,g$ is $C^1$-admissible;

\vskip2mm \noindent {\bf (G2)} $\quad\dfrac{\partial g_i}{\partial
x_{i+1}}\ge \varepsilon_0, \quad \dfrac{\partial
g_{i+1}}{\partial x_i}\ge \varepsilon_0, \qquad 1\le i\le
n-1, \,(t,x)\in \RR\times \RR^n,$

\noindent hold for each $g\in H(f)$.

\vskip 2mm

\newtheorem{ndlf}{Remark}[section]
\begin{ndlf}\label{re-diffe-x}
{\rm For any $g\in H(f)$, Let $x(t,x_1,g)$ and $x(t,x_2,g)$ be
distinct solutions of {\rm (\ref{tri-equations-g})} on $\mathbb{R}$. If we write $x(t)=x(t,x_1,g)-x(t,x_2,g)$,
then Lemma \ref{DLF} holds for such $x(t)$, in which the element $a_{ij}(t)$ of the matrix-valued function
$A(t)$ can be written as
\[a_{ij}(t)=\int_{0}^{1}\frac{\partial g_i}{\partial x_j}(t,(1-\tau)x(t,x_1,g)+\tau x(t,x_2,g))d\tau.\]
}
\end{ndlf}

In this section, we will assume that $f$ is {\it time-recurrent} (i.e., the time translation flow $(H(f),\mathbb{R}), (g,t)\mapsto g\cdot t$ for
$g\in H(f)$ and $t\in \mathbb{R}$ is minimal). This is satisfied, for instance, when $f$ is a {\it uniformly almost periodic} or, more generally, a {\it uniformly almost automorphic function}, whose definition is given as follows.

\newtheorem{a-p-a-a-function}[ndlf]{Definition}
\begin{a-p-a-a-function}\label{a-p-a-a-function}
{\rm \begin{description}
 \item[{\rm (1)}]
 A function $f\in C(\RR,\RR^n)$ is {\it almost
periodic} if, for any $\varepsilon>0$, the set
$T(\varepsilon):=\{\tau:\abs{f(t+\tau)-f(t)}<\varepsilon,\,\forall
t\in \RR\}$ is relatively dense in $\RR$. $f$ is {\it almost
automorphic} if for any $\{t_n'\}\subset \RR$ there is a
subsequence $\{t_n\}$ and a function $g:\RR\to \RR^n$ such that
$f(t+t_n)\to g(t)$ and $g(t-t_n)\to f(t)$ hold pointwise.

\item[{\rm (2)}] A function $f\in C(\RR\times D,\RR^n)(D\subset \RR^m)$
is {\it uniformly almost periodic} or {\it uniformly almost
automorphic} in $t$ if $f(t,u)$ is bounded and uniformly
continuous on $\RR\times K$ for any compact subset $K\subset D$,
(i.e., $f$ is admissible), and is almost periodic or almost
automorphic in $t\in \RR$.
\end{description}}
\end{a-p-a-a-function}

\noindent A subset $K\subset \mathbb{R}^n\times H(f)$ is {\it invariant}
if $\Pi_t(K)=K$ for every $t\in \RR$. A subset $K\subset \mathbb{R}^n\times H(f)$ is
called {\it minimal} if it is compact, invariant and the only
non-empty compact invariant subset of it is itself. Denote by $p:\mathbb{R}^n\times H(f)\to H(f), (x_0,g)\mapsto g$ the natural flow homomorphism. An invariant compact set $K\subset\mathbb{R}^n\times H(f)$ is called an {\it almost $1$-cover} (resp. {\it $1$-cover}) of $H(f)$, if $p^{-1}(g)\cap K$ is a singleton for at least one $g\in H(f)$ (resp. for any $y\in Y$). \vskip 2mm

The following results, adopted from \cite{Wang}, have already showed the structure of general minimal sets and $\omega$-limit sets of \eqref{skew-fl}.

\newtheorem{minset}[ndlf]{Lemma}
\begin{minset}\label{minset}
\begin{description}
\item{{\rm (i)}} If $E\subset\mathbb{R}^n\times H(f)$ is a minimal set of \eqref{skew-fl}, then $E$ is an almost $1$-cover of $H(f)$;
\item{{\rm (ii)}} Let $E_1,E_2$ be two minimal sets of \eqref{skew-fl}. Then for any $(x_i,g)\in E_i, i=1,2,$ one has $\sigma(x(t,x_1,g)-x(t,x_2,g))= {\it const}$ for all $t\in \mathbb{R}$.
 \item{{\rm (iii)}} Any $\omega$-limit set of \eqref{skew-fl} contains at most two minimal sets.
\end{description}
\end{minset}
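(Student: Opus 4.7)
All three items share the same engine: if $z(t) = x(t,x_1,g) - x(t,x_2,g)$ is the difference of two solutions over the same base point, then by Remark \ref{re-diffe-x} it satisfies a linear cooperative tridiagonal system, and hence Lemma \ref{DLF} applies. So $\sigma(z(t))$ is integer-valued, nonincreasing, and stabilizes at $\pm\infty$ to constants $k_-\ge k_+$. The overall strategy is to use minimality/recurrence of the base flow on $H(f)$ to force $k_-=k_+$, i.e., to upgrade one-sided monotonicity to strict constancy of $\sigma$ along the difference.

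For (ii), I would form the product skew-product $\hat{\Pi}_t=\Pi_t\times\Pi_t$ on $\mathbb{R}^n\times\mathbb{R}^n\times H(f)$, restricted to the compact invariant fibered set $E_1\times_{H(f)} E_2$; the orbit closure of $(x_1,x_2,g)$ contains a minimal subset $M$ whose projection onto each $E_i$ is surjective by minimality of $E_i$. Choose a sequence $t_n\to+\infty$ with $\hat{\Pi}_{t_n}(x_1,x_2,g)\to(y_1,y_2,g)\in M$. Passing to the limit in the linear equation and using continuity of $\sigma$ on $\Lambda$ (applied after shifting to a time $t^*$ where the relevant difference lies in $\Lambda$, possible by Lemma \ref{DLF}(i)), the limit solution has $\sigma$-value at a generic point equal to $k_+$; a mirror argument at $-\infty$ gives $k_-$ for the limit from the other side. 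Since both are $\sigma$-values of solutions of the \emph{same} limit linear system, Lemma \ref{tech} forces them to coincide, yielding constancy.

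For (i), apply (ii) with $E_1=E_2=E$: any distinct $(x_1,g),(x_2,g)\in E$ give $\sigma(x(t,x_1,g)-x(t,x_2,g))\equiv k\in\{0,\dots,n-1\}$. The fiber cardinality map $g\mapsto\#(p^{-1}(g)\cap E)$ is upper semicontinuous on $H(f)$, so by Baire category the locus on which it is locally constant at its minimum $m$ is residual; there one can select continuous branches $g\mapsto x_1(g),\dots,x_m(g)$. If $m\ge 2$, letting $g$ approach the boundary of such a selection neighborhood produces a pair whose difference leaves $\Lambda$ in the limit, contradicting the forced constancy of $\sigma$ along this difference. Hence $m=1$ and $E$ is an almost $1$-cover.

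For (iii), suppose three distinct minimal sets $E_1,E_2,E_3\subset\omega(x_0,g_0)$. By (i), each $E_j$ has a residual singleton-fiber locus $R_j\subset H(f)$, and one picks $g^*\in R_1\cap R_2\cap R_3$ (nonempty by Baire) with unique representatives $y_j^*$. Part (ii) gives constants $k_{ij}=\sigma(y_i^*(t)-y_j^*(t))$. Since $x(t,x_0,g_0)$ accumulates on each $E_j$, one extracts three distinct limit $\sigma$-values for the differences $x(t,x_0,g_0)-y_j^*(t)$; combined with the identity $y_1^*-y_3^*=(y_1^*-y_2^*)+(y_2^*-y_3^*)$, this yields an ordering constraint on $\{k_{12},k_{13},k_{23}\}$ (e.g., $k_{13}\le\min\{k_{12},k_{23}\}$ with strict inequality forced by a third accumulation) that has no simultaneous solution, contradicting the existence of three sets. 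The main obstacle throughout is that $\sigma$ is only defined on the open dense set $\Lambda$, so every limit argument must first retreat to a time at which the relevant difference lies in $\Lambda$; this bookkeeping is most delicate in (iii), where three pairwise differences must be controlled in $\Lambda$ simultaneously, and in (i), where a continuous selection must be maintained past points where it threatens to leave $\Lambda$.
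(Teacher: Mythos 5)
The paper does not prove this lemma in-text; the proof is a citation to Wang's earlier work (Lemma 4.2, Theorems 3.6 and 4.4 of \cite{Wang}), so there is no detailed in-paper argument to compare against. Assessed on its own terms, your sketch has the right high-level shape---apply the integer Lyapunov function $\sigma$ to differences of solutions, use monotonicity and stabilization at $\pm\infty$, and combine with recurrence of the base---but several of the key steps do not go through as written.

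For (ii), the decisive claim is that the $+\infty$ and $-\infty$ limits produce ``solutions of the \emph{same} limit linear system,'' so that Lemma~\ref{tech} forces $k_+=k_-$. This is not justified. The coefficient matrix $B(t)$ of the linearized difference equation is determined by the pair $(x(t,x_1,g),x(t,x_2,g))$ (see Remark~\ref{re-diffe-x}), not merely by $g\cdot t$. The $\omega$- and $\alpha$-limit sets of $(x_1,x_2,g)$ in the product flow may be disjoint minimal subsets of $E_1\times_{H(f)}E_2$; the corresponding limit linear systems are then genuinely different and Lemma~\ref{tech} alone does not connect $k_+$ to $k_-$. You need an additional mechanism (in the literature this is supplied by first establishing the almost $1$-cover property, which lets one identify the $\omega$- and $\alpha$-limits over a common residual fiber); as written the argument is circular, since you in turn derive (i) from (ii).

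For (i), the fiber-cardinality map $g\mapsto\#(p^{-1}(g)\cap E)$ for a compact $E$ is in general neither finite-valued nor upper semicontinuous (fibers can merge in the limit or blow up), so the Baire-category reduction and the ``continuous branches'' selection are not available as stated. The closing step---that letting $g$ run to the boundary of a selection neighborhood forces the difference to exit $\Lambda$---is not argued; the limit could equally well remain in $\Lambda$ with a changed $\sigma$-value, or the branches could coalesce, neither of which yet contradicts constancy of $\sigma$.

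For (iii), the claimed ordering constraint $k_{13}\le\min\{k_{12},k_{23}\}$ (with a strict inequality somewhere) is unsupported. The pointwise identity $y_1^*-y_3^*=(y_1^*-y_2^*)+(y_2^*-y_3^*)$ does not put the three differences into a single linear tridiagonal system: each pair $(i,j)$ has its own $B_{ij}(t)$ coming from the mean-value form of Remark~\ref{re-diffe-x}. Hence Proposition~\ref{linear_space} (linearity of Floquet spaces, valid only within one fixed linear system) cannot be used to bound $\sigma$ of the sum in terms of the $\sigma$'s of the summands, and no contradiction is actually produced.
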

\begin{proof}
See \cite[Lemma 4.2, Theorems 3.6 and 4.4]{Wang}.
\end{proof}

Motivated by the work of Shen and Yi \cite{SY,SY2}, we will utilize the obtained Floquet theory in Section \ref{Floquet_Theory} to improve the lifting property of the $\omega$-limit sets which are hyperbolic (see Definition \ref{hyper-def} below).

Let $Y\subset\mathbb{R}^n\times H(f)$ be a compact invariant set of \eqref{skew-fl}. For each $y=(x_0,g)\in Y$, consider the linearized equation of \eqref{tri-equations-g} along the orbit $y\cdot t\triangleq\Pi_t(x_0,g)$:
\begin{equation}\label{hyperbolic}
\dot{z}=B(y\cdot t)z,\quad t\in\mathbb{R},\ z\in\mathbb{R}^n,
\end{equation}
where $B(y\cdot t)=Dg(t,x(t,x_0,g))$ is a matrix-valued function and of cooperative tridiagonal form.

\newtheorem{hyperbolic}[ndlf]{Definition}
\begin{hyperbolic}\label{hyper-def}
{\rm A compact invariant set $Y\subset\mathbb{R}^n\times H(f)$ of \eqref{skew-fl} is called \emph{hyperbolic} if \eqref{hyperbolic} admits an exponential dichotomy over $Y$ and the corresponding projection $Q$ satisfies ${\rm Im} Q(y)\neq\{0\}$ for all $y\in Y$.}
\end{hyperbolic}
Now we are ready to state our main result as follows.

\newtheorem{hy-omega}[ndlf]{Theorem}
\begin{hy-omega}\label{hy-omega}
Consider an $\omega$-limit set $\omega(x_0,g_0)\subset\mathbb{R}^n\times H(f)$ of $(x_0,g_0)\in\mathbb{R}^n\times H(f)$. If $\omega(x_0,g_0)$ is hyperbolic, then it is a $1$-cover of $H(f)$.
\end{hy-omega}

Theorem \ref{hy-omega} immediately follows the lifting-property of hyperbolic omega-limit sets of
two-dimensional competitive or cooperative systems:

\newtheorem{hy-omega-1}[ndlf]{Colrollary}
\begin{hy-omega-1}\label{hy-omega-1}
Consider the two-dimensional nonautonomous competitive ({\rm resp.} cooperative) system
\begin{equation}\label{2D-tri-equation-f1}
\begin{split}
\dot{x}_1 &=f_1(t,x_1,x_2),\\
\dot{x}_2 &=f_2(t,x_{1},x_2),\\
\end{split}
\end{equation}
where $f=(f_1,f_2)$ is a $C^1$-admissible and time-recurrent function satisfying
\begin{equation*}
\dfrac{\partial f_i}{\partial x_{j}}(t,x)\le -\varepsilon_0<0 \quad {\rm \textnormal{ (resp.}}\quad
\dfrac{\partial f_i}{\partial x_{j}}(t,x)\ge \varepsilon_0>0),
 \qquad 1\le i\ne j\le 2,
\end{equation*}
for all $(t,x)\in \RR\times\RR^2$. Let $(x,g)\in \RR^2\times H(f)$ be such that its orbit $\Pi_t(x,g) (t\ge 0)$ is bounded. If the omega-limit set
$\omega(x,g)$ is hyperbolic, then $\omega(x,g)$ is a $1$-cover of $H(f)$.
\end{hy-omega-1}

\newtheorem{main-result}[ndlf]{Remark}
\begin{main-result}
{\rm Theorem \ref{hy-omega} is a natural generalization of the results of
Smillie \cite{Smillie} and Smith \cite{Smith}. Moreover, in a certain
sense, it also extends to higher dimension ($n\ge 3$) the results in spatially
homogeneous cases by Hetzer and Shen \cite{HS}, who
investigated the dynamics of two-dimensional competitive or
cooperative almost periodic systems. In particular, Corollary \ref{hy-omega-1} generalizes the results of
de Mottoni and Schiaffino \cite{MS} and Hale and Somolinos \cite{JHS}, who proved that all
solutions of two-dimensional $T$-periodic competitive or cooperative systems are
asymptotic to $T$-periodic solutions. See also \cite{HS2} and \cite{SW} for extensions of this work.}
\end{main-result}

In order to prove our main results, we first proceed with the characterization of the integer-valued function $\sigma$ on the local invariant manifolds of hyperbolic invariant sets. Our approach is motivated by \cite{SY,SY2}. However, as mentioned in the introduction, we still need more delicate trick to overcome the difficulties that there is no direct characteristic for the critical phase-points which are not in $\Lambda$ (See below the detail in the proof of Theorem \ref{hy-omega}), while for the zero-crossing number, the standard characteristic of $u(\xi)=u_{x}(\xi)=0$, for some $\xi$, can be directly used to analyze such critical situation (see, e.g \cite[Theorem 4.8]{SY}).

\vskip 2mm
Let $Y\subset\mathbb{R}^n\times H(f)$ be a hyperbolic compact invariant set of \eqref{skew-fl}. For any $y=(x_0,g)\in Y$, let $z=x-x(t,x_0,g)$. Then $z$ satisfies the nonlinear equation
\begin{equation}\label{variation}
\dot{z}=B(y\cdot t)z+G(z,y\cdot t),
\end{equation}
where $B(y\cdot t)$ ia as in \eqref{hyperbolic} and $G(z,y\cdot t)=O(|z|^2)$.
Noticing that system \eqref{hyperbolic} admits an exponential dichotomy over $Y$, it follows from standard invariant manifold theory (see \cite{CLL,Henry,SY,YY})
that system \eqref{variation} possesses for each $y\in Y$ a local stable manifold $W^s(y)$ and a local unstable manifold $W^u(y)$; and one can find a constant $C>0$ such that for any $y\in Y$ and $x_s\in W^s(y)$, $x_u\in W^u(y)$,
\begin{eqnarray}\label{longtime}
\begin{split}
|\Psi_t(x_s,y)|&\leq Ce^{-(\alpha/2)t}|x_s|\quad\text{for } t\geq 0,\\
|\Psi_t(x_u,y)|&\leq Ce^{(\alpha/2)t}|x_u|\quad\text{for } t\leq 0,
\end{split}
\end{eqnarray}
where $\Psi_t(\cdot,y)$ is the solution operator of \eqref{variation}. Moreover, they are overflowing invariant in the sense that
\begin{equation}\label{wu-event}
\begin{split}
\Psi_t(W^s(y),y)&\subseteq W^s(y\cdot t)\quad \text{for } t\gg 1,\\
\Psi_t(W^u(y),y) &\subseteq W^u(y\cdot t)\quad \text{for } t\ll -1.
\end{split}
\end{equation}
%
Now for each $y=(x_0,g)\in Y$, we define
\begin{equation*}
\begin{split}
M^s(y)&\triangleq\{x\in\mathbb{R}^n| x-x_0\in W^s(y)\},\\
M^u(y)&\triangleq\{x\in\mathbb{R}^n| x-x_0\in W^u(y)\}.
\end{split}
\end{equation*}
Then $M^s(y)$ and $M^u(y)$ are overflowing invariant to \eqref{tri-equations-g}, that is
\begin{equation*}
\begin{split}
x(t,M^s(y),g)&\subset M^s(y\cdot t)\quad \text{for } t\gg 1,\\
x(t,M^u(y),g)&\subset M^u(y\cdot t)\quad \text{for } t\ll -1.
\end{split}
\end{equation*}
We also note that if $Y$ is moreover connected then $\dim M^{u,s}(y)=\dim V^{u,s}(y)$ are positive integers independent of $y\in Y$, here $V^{u,s}(y)$ are the unstable/stable subspace of \eqref{hyperbolic} defined in the end of last section.

\newtheorem{capnonlinear}[ndlf]{Lemma}
\begin{capnonlinear}\label{capnonlinear}
Let $Y\subset\mathbb{R}^n\times H(f)$ be a connected compact hyperbolic invariant set of \eqref{skew-fl}. Then, for $(x_1,g), (x_2,g)\in Y$ with $|x_1-x_2|$ being sufficiently small, one has $M^s(x_1,g)\cap M^u(x_2,g)\neq\emptyset$.
\end{capnonlinear}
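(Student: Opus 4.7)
The plan is to reduce the claim to the linear transversality statement of Lemma~\ref{capl} via a graph representation of the local invariant manifolds together with the inverse function theorem, exploiting the compactness of $Y$ for uniform estimates.

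First I would record that the local stable and unstable manifolds of \eqref{variation} can be represented uniformly in $y\in Y$ as $C^1$ graphs over the linear spectral subspaces. More precisely, standard invariant manifold theory (e.g., \cite{CLL,Henry}) gives constants $\rho>0$ and $C^1$ maps $h^s(\cdot,y):V^s(y)\cap B_{\rho}(0)\to V^u(y)$ and $h^u(\cdot,y):V^u(y)\cap B_{\rho}(0)\to V^s(y)$, depending continuously on $y\in Y$, with
\[
h^{s,u}(0,y)=0,\qquad D_v h^{s,u}(0,y)=0,
\]
such that
\[
W^s(y)=\{v+h^s(v,y):v\in V^s(y),\,|v|<\rho\},\qquad W^u(y)=\{v+h^u(v,y):v\in V^u(y),\,|v|<\rho\}.
\]
Compactness of $Y$ together with the continuity in $y$ guarantees that $\rho$ can be chosen independently of $y$.

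Next, fix $y_i=(x_i,g)\in Y$ and look for $x\in M^s(y_1)\cap M^u(y_2)$. Writing $z_i=x-x_i$ and using the graph representation, this amounts to finding $v_1\in V^s(y_1)$ and $v_2\in V^u(y_2)$, both of norm less than $\rho$, such that
\[
F(v_1,v_2;y_1,y_2)\triangleq \bigl(v_1+h^s(v_1,y_1)\bigr)-\bigl(v_2+h^u(v_2,y_2)\bigr)=x_2-x_1.
\]
The differential at $(v_1,v_2)=(0,0)$ is the linear map $(v_1,v_2)\mapsto v_1-v_2$ from $V^s(y_1)\times V^u(y_2)$ to $\mathbb{R}^n$. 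By Lemma~\ref{capl}, if $d(y_1,y_2)$ is sufficiently small then $V^s(y_1)\oplus V^u(y_2)=\mathbb{R}^n$, so this differential is a linear isomorphism. Since $y_i=(x_i,g)$ share the same hull coordinate, $d(y_1,y_2)\le |x_1-x_2|$, hence $|x_1-x_2|$ small forces $d(y_1,y_2)$ small.

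Applying the inverse function theorem, $F(\cdot,\cdot;y_1,y_2)$ is a $C^1$ diffeomorphism from a neighbourhood of $0$ onto a neighbourhood of $0$ in $\mathbb{R}^n$. The main obstacle is to get the size of this neighbourhood to be independent of $(y_1,y_2)$; this follows from a standard quantitative inverse function argument once one observes that (i) the operator norms of the projections $\Pi_m(y)$, $\widetilde{\Pi}_m(y)$ are uniformly bounded on $Y$ by continuity and compactness, (ii) the norm of the inverse of the linearization is uniformly bounded as long as the angle between $V^s(y_1)$ and $V^u(y_2)$ stays uniformly bounded away from zero (again by Lemma~\ref{capl} and compactness of $Y$), and (iii) the derivatives $D_v h^{s,u}$ are uniformly continuous at the origin. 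Consequently there exists $\delta>0$, independent of $(y_1,y_2)\in Y\times Y$ with $d(y_1,y_2)<\delta$, such that the equation $F(v_1,v_2;y_1,y_2)=x_2-x_1$ admits a solution whenever $|x_1-x_2|<\delta$. The corresponding point $x=x_1+v_1+h^s(v_1,y_1)=x_2+v_2+h^u(v_2,y_2)$ lies in $M^s(y_1)\cap M^u(y_2)$, completing the proof.
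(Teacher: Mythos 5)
Your argument is correct and is essentially the paper's: both represent $W^{s,u}(y)$ as $C^1$ graphs over $V^{s,u}(y)$ via the functions $h^{s,u}$, invoke Lemma~\ref{capl} to get the transversality $V^s(y_1)\oplus V^u(y_2)=\mathbb{R}^n$ for nearby fibres, and solve the resulting equation by an implicit/inverse function argument whose linearization at the origin is (up to perturbation) $(v_1,v_2)\mapsto v_1-v_2$. Your phrasing is in fact slightly cleaner than the paper's in identifying the full joint derivative in $(v_1,v_2)$ as the relevant isomorphism, and in spelling out why the neighbourhood size can be taken uniform in $(y_1,y_2)$.
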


\begin{proof}
By the standard invariant manifold theory(see \cite{CLL,Henry,SY,YY}), for each $y=(x_0,g)\in Y$, there are $C^1$ functions $h^s(\cdot,y): V^s(y)\rightarrow V^u(y)$ and $ h^u(\cdot,y): V^u(y)\rightarrow V^s(y)$ such that $h^{s,u}(z,y)=o(|z|)$, $|(\partial h^{s,u}/\partial z)($ $z,y)|\le C_0<1$ for $z\in V^{s,u}(y)$, and
\[M^s(y)=x_0+W^s(y)=\{x_0+x^s+h^s(x^s,y) | x^s\in V^s(y)\cap B_{\delta_*}(x_0)\},\]
\[M^u(y)=x_0+W^u(y)=\{x_0+x^u+h^u(x^u,y) | x^u\in V^u(y)\cap B_{\delta_*}(x_0)\},\]
where $B_{\delta_*}(x_0)=\{x\in\mathbb{R}^n: |x-x_0|<\delta*\}$ for some $\delta_{_*}>0$.

By Lemma \ref{capl}, one can find a small $\delta_1\in (0,\delta_*)$ such that if
$\abs{x_1-x_2}<\delta_1$, then $V^s(x_1,g)\oplus V^u(x_2,g)=\mathbb{R}^n$. For such $x_1$ and $x_2$, define a function $k:(x_1,x_2,V^s(x_1,g)\oplus V^u(x_2,g))\rightarrow\mathbb{R}^n$, by
\begin{equation}\label{k-represen}
k(x_1,x_2,x_1^s,x_2^u)=x_1+x_1^s+h^s(x_1^s,(x_1,g))-(x_2+x_2^u+h^u(x_2^u,(x_2,g)))
\end{equation} for any
$x_1^s\in V^s(x_1,g)$ and $x_2^u\in V^u(x_2,g)$.

Note that $k(x,x,0,0)=0$ and $\partial k/\partial x_1^s=I+\partial h^s/\partial x_1^s$ is invertible. By implicit function theorem, one can find a $\delta\in (0,\delta_1)$ such that, for each $x_1, x_2\in\mathbb{R}^n$ with $|x_1-x_2|<\delta$ and $x_2^u\in V^u(x_2,g)$ with $|x_2^u|<\delta$, there is a unique $x_1^s\in V^s(x_1,g)$ with $|x_1^s|<\delta$ such that $k(x_1,x_2,x_1^s,x_2^s)=0$ holds. Therefore, by \eqref{k-represen} and the representation of $M^{s,u}(y)$ in the above, one obtains that $M^s(x_1,g)\cap M^u(x_2,g)\neq\emptyset$ whenever $|x_1-x_2|$ sufficiently small. We have completed the proof.
\end{proof}

\newtheorem{class3}[ndlf]{Theorem}
\begin{class3}\label{class3}
Let $Y\subset\mathbb{R}^n\times H(f)$ be a connected compact hyperbolic invariant set of \eqref{skew-fl}. Denote $N=\dim M^u(Y)$. Then for any $y=(x_0,g)\in Y$ and $x^s\in M^s(y)\backslash\{x_0\}$ (resp. $x^u\in M^u(y)\backslash\{x_0\}$), one has  $\sigma(x^s-x_0)\geq N$ (resp. $\sigma(x^u-x_0)\leq N-1$), whenever $x^s-x_0\in \Lambda$ (resp. $x^u-x_0\in \Lambda$).
\end{class3}

\begin{proof}
We prove only for the unstable case and the stable case is similar.

For any $y=(x_0,g)$ and $x^u\in M^u(y)\backslash\{x_0\}$, let $x(t)=x(t,x^u,g)-x(t,x_0,g)$.
Since $x(0)=x^u-x_0\in W^u(y)$, we have $x(t)\in W^u(y\cdot t)$ for $t\ll -1$ by \eqref{wu-event}, and hence
\[x(t)=x_0^u(x(t),y\cdot t)+h^u(x_0^u(x(t),y\cdot t),y\cdot t)\]
for $t\ll -1$, where $x_0^u(x(t),y\cdot t)\in V^u(y\cdot t)$ and $h^u$ is the $C^1$-function defined in the proof of the above Lemma.
Choose any sequence $t_n\rightarrow-\infty$ such that $y\cdot t_n\rightarrow y^*=(x^*,g^*)$. By virtue of the contracting property of $W^u(y\cdot t)$ in the reverse time, it follows that
\[\lim_{n\rightarrow\infty}\frac{x(t_n)}{|x(t_n)|}=\lim_{n\rightarrow\infty}\frac{x_0^u(x(t_n),
y\cdot t_n)}{|x_0^u(x(t_n), y\cdot t_n)|}= x^*\in V^u(y^*).\]
Let $\Phi$ be the principal solution matrix of $\eqref{hyperbolic}_{y^*}$ (i.e., \eqref{hyperbolic} with $y$ replaced by $y^*$). Then $\Phi(t,y^*)x^*$ is a solution of $\eqref{hyperbolic}_{y^*}$. Moreover, for each $t\in \mathbb{R}$, one has $x(t+t_n)/|x(t+t_n)|\rightarrow\Phi(t,y^*)x^*/|\Phi(t,y^*)x^*|$ as $n\rightarrow\infty$. By Lemma \ref{DLF}, there exists a $t_0<0$ such that $\Phi(t_0,y^*)x^*\in\Lambda$. Consequently, $x(t_0+t_n)\in\Lambda$ and $\sigma(x(t_0+t_n))=\sigma(\Phi(t_0,y^*)x^*)$ for all $n$ sufficiently large. Note also that $\Phi(t_0,y^*)x^*\in V^u(y^*\cdot t_0)$. Then by \eqref{dimension}, one obtains that
$\sigma(x(t_0+t_n))\leq N-1$ for all $n$ sufficiently large.

Now recall that $x(t)=x(t,x^u,g)-x(t,x_0,g)$. By virtue of Remark \ref{re-diffe-x} (hence Lemma \ref{DLF} holds for such $x(t)$), we have $\sigma(x^u-x_0)=\sigma(x(0))\le \sigma(x(t_0+t_n))\leq N-1$, whenever $x^u-x_0\in \Lambda$. Thus we have completed the proof.
\end{proof}

\newtheorem{proximal}[ndlf]{Definition}
\begin{proximal}
{\rm  Assume that $Y\subset\mathbb{R}^n\times H(f)$ is a compact invariant set of \eqref{skew-fl}. A pair $(x_1,g),(x_2,g)\in Y$ is said to be \emph{one-sided fiber distal} if $\inf_{t\in\mathbb{R}^{+}}|x(t,x_1,g)-x(t,x_2,g)|>0$ or   $\inf_{t\in\mathbb{R}^-}|x(t,x_1,g)-x(t,x_2,g)|>0$. }
\end{proximal}

\newtheorem{noproximal}[ndlf]{Lemma}
\begin{noproximal}\label{noproximal}
Let $Y\subset\mathbb{R}^n\times H(f)$ be a connect and compact hyperbolic invariant set of \eqref{skew-fl}. Then all pairs in $Y$ are one-sided fiber distal.
\end{noproximal}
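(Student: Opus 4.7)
The plan is to argue by contradiction. Suppose $(x_1,g),(x_2,g)\in Y$ with $x_1\ne x_2$ fail to be one-sided fiber distal; then $z(t):=x(t,x_1,g)-x(t,x_2,g)$ is a nontrivial solution of a linear tridiagonal cooperative system (Remark \ref{re-diffe-x}) with $\liminf_{t\to+\infty}|z(t)|=\liminf_{t\to-\infty}|z(t)|=0$. Lemma \ref{DLF}(iii) then supplies integers $N_+\le N_-$ with $\sigma(z(t))\equiv N_+$ for $t\gg 0$ and $\sigma(z(t))\equiv N_-$ for $t\ll 0$. Writing $N:=\dim V^u(Y)$, I would aim to establish $N_+\ge N$ from the forward proximality and $N_-\le N-1$ from the backward one, which together with the monotonicity inequality $N_+\le N_-$ yield the chain $N\le N_+\le N_-\le N-1$, a contradiction.

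For the forward step I would extract $t_n\to+\infty$ with $|z(t_n)|\to 0$ and, by compactness of $Y$, pass to a subsequence so that $\Pi_{t_n}(x_i,g)\to y^+\in Y$ for $i=1,2$. The key assertion is that every accumulation point $\tilde z^*$ of the normalized sequence $\{z(t_n)/|z(t_n)|\}$ lies in $V^s(y^+)\cap\Lambda$: once this is in hand, \eqref{dimension} gives $\sigma(\tilde z^*)\ge N$, and the continuity of $\sigma$ on $\Lambda$ combined with the eventual constancy $\sigma(z(t))\equiv N_+$ for $t\gg 0$ forces $N_+\ge N$. To verify the assertion I would study the shift-normalized family $\tilde z^n(t):=z(t+t_n)/|z(t_n)|$, which satisfies $\dot{\tilde z}^n=B(\cdot+t_n)\tilde z^n$; its coefficient matrix converges uniformly on compact intervals to the linearization $B^+$ along the $y^+$-orbit, and $B^+$ admits an exponential dichotomy by hyperbolicity of $Y$. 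If $\tilde z^*$ carried a nonzero $V^u(y^+)$-component, the unstable part of $\Phi^+(T,y^+)\tilde z^*$ would grow like $e^{\alpha T/2}$, so a Gronwall comparison would give $|z(t_n+T)|\gtrsim|z(t_n)|e^{\alpha T/2}$ for $T$ large and $n$ large; splicing this amplification along a cover of the forward orbit in the compact hyperbolic set $Y$ by finitely many translated approximate-linearization windows would force $|z(t)|$ to stay bounded below on intervals of growing length, contradicting the forward $\liminf|z(t)|=0$. The backward bound $N_-\le N-1$ is obtained by the mirror construction at $s_n\to-\infty$: the limit direction at $y^-$ must then lie in $V^u(y^-)\cap\Lambda$, and \eqref{dimension} gives $\sigma\le N-1$.

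I expect the main obstacle to be two intertwined technical points. First, nothing in the extraction argument guarantees that the accumulation point $\tilde z^*$ lies in the open dense set $\Lambda$ rather than on its complement. As the authors emphasize in the introduction, unlike the zero-crossing setting in \cite{SY}, there is no pointwise characterization of critical configurations analogous to $u(\xi)=u_x(\xi)=0$, and the authors' own remedy was to introduce the relevant-sequences device used in the proof of Proposition \ref{linear_space}. I would reuse that trick here, extracting a pair of differently-normalized sequences whose limits cannot both fall outside $\Lambda$, and then analyzing the convergent one. Second, the iterative splicing that rules out a $V^u(y^+)$-component of $\tilde z^*$ is the most delicate technical step: it needs the trajectory $\{\Pi_t(x_i,g)\}_t$ to remain in the compact hyperbolic set $Y$ so that the coefficient matrix repeatedly falls within dichotomy-approximating regions along a chain of compact windows covering the propagation interval, after which the cocycle splices the local exponential-growth estimates into a global lower bound on $|z|$.
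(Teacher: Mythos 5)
Your overall strategy is plausible but has a genuine gap at its central step. You reduce the whole argument to the assertion that every accumulation point $\tilde z^*$ of $\{z(t_n)/|z(t_n)|\}$ lies in $V^s(y^+)$, and then propose to establish this by showing that a nonzero $V^u$-component would force exponential growth of $|z(t_n+T)|$, contradicting $\liminf_{t\to+\infty}|z(t)|=0$. This does not work. The exponential growth you obtain is only valid while $z$ remains in the regime where the integrated coefficient matrix of Remark~\ref{re-diffe-x} is close to the linearization $B(y^+\cdot t)$, i.e.\ while $|z|$ stays below the local-manifold scale $\delta_*$. Starting from $|z(t_n)|\to 0$, the growth window has length $T_n\sim\ln(\delta_*/|z(t_n)|)\to\infty$, but at the end of that window $|z|$ is only of order $\delta_*$ and the linear estimate stops. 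Nothing prevents $|z|$ from subsequently decaying and again becoming arbitrarily small at later times $t_{n+1}$: the assumption is only $\liminf=0$, not forward convergence, so $(x_2,g)$ need not lie on the local stable manifold of $(x_1,g)$. Consequently the direction of $z(t_n)$ can genuinely carry a $V^u$-component, and the ``splicing'' you invoke does not produce the global lower bound on $|z|$ you need. (Indeed, if one writes $z(t_n)=\bigl(x(t_n,x_1,g)-x_+^n\bigr)+\bigl(x_+^n-x(t_n,x_2,g)\bigr)$ using the intersection point $x_+^n\in M^s(\Pi_{t_n}(x_1,g))\cap M^u(\Pi_{t_n}(x_2,g))$, the two summands are asymptotically in $V^s$ and $V^u$ directions respectively, and their relative sizes are uncontrolled; the normalized limit can be anywhere.)

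The paper avoids this entirely. It never locates the limit direction. Instead it fixes a single time $t_0$ with $z(t_0)\in\Lambda$ and a $\sigma$-stable $\varepsilon_0$-neighborhood, takes $t_k\to\infty$ and $s_k\to -\infty$ with $|z(t_k)|,|z(s_k)|\to 0$, uses Lemma~\ref{capnonlinear} to produce transversal intersection points $x_+^k\in M^s(\Pi_{t_k}(x_1,g))\cap M^u(\Pi_{t_k}(x_2,g))$ and $x_-^k\in M^s(\Pi_{s_k}(x_1,g))\cap M^u(\Pi_{s_k}(x_2,g))$, flows those points back to time $t_0$ using the contraction estimates \eqref{longtime} to land within $\varepsilon_0$ of the reference orbits, and then applies the $\sigma$-bounds of Theorem~\ref{class3} together with monotonicity (Lemma~\ref{DLF}(ii)) to get both $\sigma(z(t_0))\geq N$ and $\sigma(z(t_0))\leq N-1$. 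That is the structure you should aim for: the nonlinear invariant-manifold intersection is what replaces (and sidesteps) the false linear claim about the limiting direction. Your idea of deriving $N_+\geq N$ and $N_-\leq N-1$ and using monotonicity $N_+\leq N_-$ is sound in spirit; it is the mechanism for producing the forward and backward inequalities that needs to be replaced by the manifold-intersection argument.

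One smaller remark: your worry about $\tilde z^*$ possibly falling outside $\Lambda$ can actually be handled cleanly via the argument of Lemma~\ref{tech} applied to the normalized sequence $z(\cdot+t_n)/|z(t_n)|$ and the eventual constancy $\sigma(z(t))\equiv N_+$ from Lemma~\ref{DLF}(iii); the relevant-sequences device of Proposition~\ref{linear_space} is not the natural tool here. That point is not where the proposal breaks down.
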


\begin{proof}
Suppose that there is a pair $\{(x_1,g_0), (x_2,g_0)\}\in Y$ with $$\inf_{t\in\mathbb{R}^{\pm}}|x(t,x_1,g_0)-x(t,x_2,g_0)|=0.$$ By virtue of Remark \ref{re-diffe-x}, there exists some $t_0\in\mathbb{R}$ such that $x(t_0,x_1,g_0)-x(t_0,x_2,g_0)\in\Lambda$. Then we can choose an $\varepsilon_0>0$ so small that $x(t_0,x_1,g_0)-x(t_0,x_2,g_0)+x\in\Lambda$; and moreover,
\begin{equation}\label{neighbor}
\sigma(x(t_0,x_1,g_0)-x(t_0,x_2,g_0)+x)=\sigma(x(t_0,x_1,g_0)-x(t_0,x_2,g_0))
\end{equation}
whenever $x\in\mathbb{R}^n$ with $|x|<\varepsilon_0$. Choose two sequences $t_k\rightarrow\infty, s_k\rightarrow -\infty$ ( $k\rightarrow\infty$) such that
\[|x(t_k,x_1,g_0)-x(t_k,x_2,g_0)|\rightarrow 0\]
and
\[|x(s_k,x_1,g_0)-x(s_k,x_2,g_0)|\rightarrow 0,\]
as $k\rightarrow\infty$. Consequenctly, Lemma \ref{capnonlinear} implies that for each $k$ sufficiently large, one can find
\[x_+^k\in M^s(x(t_k,x_1,g_0),g_0\cdot t_k)\cap M^u(x(t_k,x_2,g_0),g_0\cdot t_k)\]
and
\[x_-^k\in M^s(x(s_k,x_1,g_0),g_0\cdot s_k)\cap M^u(x(s_k,x_2,g_0) ,g_0\cdot s_k).\]
Using the fact of $x_+^k\in M^u(x(t_k,x_2,g_0),g_0\cdot t_k)$, $x_-^k\in M^s(x(s_k,x_1,g_0),g_0\cdot s_k)$ and \eqref{longtime}, we readily get
\begin{eqnarray*}\label{s1}
|x(s,x_+^k,g_0\cdot t_k)-x(s,x(t_k,x_2,g_0),g_0\cdot t_k)|&\leq & Ce^{(\alpha/2)s}|x_+^k-x(t_k,x_2,g_0)|,\\
|x(t,x_-^k,g_0\cdot s_k)-x(t,x(s_k,x_1,g_0),g_0\cdot s_k)|&\leq & Ce^{-(\alpha/2)t}|x_-^k-x(s_k,x_1,g_0)|,
\end{eqnarray*}
for any $s\leq 0$, $t\geq 0$ and $k$ sufficiently large. In particular, we choose $s=t_0-t_k<0$ and $t=t_0-s_k>0$.
Then one can find some $k_0$ sufficiently large such that
\[|x(t_0-t_{k_0},x_+^{k_0},g_0\cdot t_{k_0})-x(t_0,x_2,g_0)|<\varepsilon_0\]
and
\[|x(t_0-s_{k_0},x_-^{k_0},g_0\cdot s_{k_0})-x(t_0,x_1,g_0)|<\varepsilon_0.\]
Hence, by \eqref{neighbor}, one has
\begin{equation*}
\begin{split}
&\sigma(x(t_0,x_1,g_0)-x(t_0,x_2,g_0))\\
=&\sigma(x(t_0,x_1,g_0)-x(t_0,x_2,g_0)\\
&+x(t_0,x_2,g_0)-x(t_0-t_{k_0},x_+^{k_0},g_0\cdot t_{k_0}))\\
=&\sigma(x(t_0,x_1,g_0)-x(t_0-t_{k_0},x_+^{k_0},g_0\cdot t_{k_0}))\\
\geq &\sigma(x(t_{k_0},x_1,g_0)-x_+^{k_0})\\
\geq & N,
\end{split}
\end{equation*}
where the first inequality comes from Remark \ref{re-diffe-x} and Lemma \ref{DLF}(ii), and the last inequality is due to $x_+^{k_0}\in M^s(x(t_{k_0},x_1,g_0),g_0\cdot t_{k_0})$ and Theorem \ref{class3}. Similarly, one can also obtain that
\begin{equation*}
\begin{split}
& \sigma(x(t_0,x_1,g_0)-x(t_0,x_2,g_0))\\
= &\sigma(x(t_0,x_1,g_0)-x(t_0,x_2,g_0)\\
&+x(t_0-s_{k_0},x_-^{k_0},g_0\cdot s_{k_0})-x(t_0,x_1,g_0))\\
= & \sigma(x(t_0-s_{k_0},x_-^{k_0},g_0\cdot s_{k_0})-x(t_0,x_2,g_0))\\
\leq & \sigma(x(s_{k_0},x_2,g_0)-x_-^{k_0})\\
\leq &  N-1,
\end{split}
\end{equation*}
where the last inequality is due to $x_-^{k_0}\in M^u(x(s_{k_0},x_2,g_0) ,g_0\cdot s_{k_0})$. Thus we have obtained a contradiction, which implies that any pair in $Y$ is one-sided fiber distal.
\end{proof}

\newtheorem{hypmin}[ndlf]{Proposition}
\begin{hypmin}\label{hypmin}
Let $E\subset\mathbb{R}^n\times H(f)$ be a hyperbolic minimal set of \eqref{skew-fl}. Then $E$ is a $1$-cover of $H(f)$.
\end{hypmin}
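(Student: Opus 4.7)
The plan is to argue by contradiction, combining the almost $1$-cover property from Lemma \ref{minset}(i) with the one-sided fiber distal dichotomy of Lemma \ref{noproximal}. By Lemma \ref{minset}(i), there exists some $g_0 \in H(f)$ with $p^{-1}(g_0) \cap E = \{(x_0, g_0)\}$, and since $p(E)$ is a nonempty compact invariant subset of the minimal base $H(f)$, we have $p(E) = H(f)$.

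Suppose, toward a contradiction, that $E$ is not a $1$-cover, so that there is a fiber over some $g_1 \in H(f)$ containing two distinct points $(x_1, g_1), (x_2, g_1) \in E$. Because $(H(f), \cdot)$ is minimal and $H(f)$ is compact, every $\omega$-limit and every $\alpha$-limit set coincides with $H(f)$; in particular both the forward and backward orbits of $g_1$ are dense in $H(f)$. Hence one can select $t_k \to +\infty$ and $s_k \to -\infty$ with $g_1 \cdot t_k \to g_0$ and $g_1 \cdot s_k \to g_0$. Passing to subsequences via the compactness of $E$, $\Pi_{t_k}(x_i, g_1) \to (y_i^+, g_0)$ and $\Pi_{s_k}(x_i, g_1) \to (y_i^-, g_0)$ in $E$ for $i = 1, 2$. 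Since $p^{-1}(g_0) \cap E$ is a singleton, $y_1^{\pm} = y_2^{\pm} = x_0$, yielding
\[
\inf_{t \in \mathbb{R}^+} |x(t, x_1, g_1) - x(t, x_2, g_1)| \;=\; 0 \;=\; \inf_{t \in \mathbb{R}^-} |x(t, x_1, g_1) - x(t, x_2, g_1)|.
\]
This contradicts Lemma \ref{noproximal}, which asserts that every pair in a connected compact hyperbolic invariant set is one-sided fiber distal.

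The main delicate point is justifying that Lemma \ref{noproximal} applies to $E$, whose statement explicitly requires connectedness. Since the base $H(f)$ is connected and minimal and the flow $\Pi_t$ is jointly continuous, the connected components of $E$ are continuously permuted by $\Pi_t$; the induced map from the connected space $\mathbb{R}$ into the totally disconnected space of components of $E$ is necessarily constant, so each component is itself $\Pi_t$-invariant, and the minimality of $E$ then collapses it to a single component. With this in place, the key conceptual step is realizing that the existence of one singleton fiber, together with two-sided density of the base orbit, forces any hypothetical pair in a multi-point fiber to be proximal in both forward and backward time simultaneously—precisely what the hyperbolic one-sided distality forbids.
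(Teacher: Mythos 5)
Your proof is correct and follows essentially the same route as the paper's: almost $1$-cover (Lemma \ref{minset}(i)) plus minimality of the base forces any pair in a multi-point fiber to be proximal in both time directions, contradicting the one-sided fiber distality of Lemma \ref{noproximal}. You have merely spelled out the density/compactness argument that the paper leaves implicit, and added the (correct but routine) observation that a minimal set is connected so that Lemma \ref{noproximal} applies.
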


\begin{proof}
Suppose that $E$ is just an almost $1$-cover (not a $1$-cover) of $H(f)$. Then it follows from minimality of $H(f)$ that there is no one-sided fiber distal pair on $E$, which contradicts to Lemma \ref{noproximal}.
\end{proof}

\vskip 2mm
\begin{proof}[Proof of Theorem {\rm \ref{hy-omega}}.]
According to Proposition \ref{hypmin}, it suffices to show that $\omega(x_0,g_0)$ is minimal. To this end, we suppose that $\omega(x_0,g_0)$ is not minimal. Then Lemma \ref{minset}(iii) implies that $\omega(x_0,g_0)$ can be written as
$\omega(x_0,g_0)=E_1\cup E_2\cup E_{12}$, where $E_i(i=1,2)$ are minimal sets (and hence, they are $1$-covers of $H(f)$ by Proposition \ref{hypmin}). Since $\omega(x,g)$ is connected, $E_{12}\neq\emptyset$. Moreover, for any $(x,g)\in E_{12}$, one has
\begin{equation}\label{ome-alp-asy}
\omega(x,g)\cap
(E_1\cup E_2)\ne \emptyset \,\textnormal{ and }\,\alpha(x,g)\cap
(E_1\cup E_2)\ne \emptyset.
\end{equation}

If $E_1=E_2$, then we pick an $(x_{12},g)\in E_{12}$ and let $(x_1,g)=E_1\cap p^{-1}(g)$. By virtue of
\eqref{ome-alp-asy}, $(x_{12},g)$ and $(x_1,g)$ is not one-sided fiber pair, which contradicts Lemma \ref{noproximal}.

If $E_1\ne E_2$, then again we pick an $(x_{12},g)\in E_{12}$ and let $(x_i,g)\in E_i\cap p^{-1}(g)$ for $i=1,2$.
Due to the same reason in the above paragraph, we may assume without loss of generality that $\omega(x_{12},g)\cap E_1\neq\emptyset$ and $\alpha(x_{12},g)\cap E_2\neq\emptyset$. Since $E_i(i=1,2)$ are $1$-covers of $H(f)$, it is easily seen that $|x(t,x_{12},g)-x(t,x_1,g)|\rightarrow 0$ as $t\rightarrow\infty$, and $|x(t,x_{12},g)-x(t,x_2,g)|\rightarrow 0$ as $t\rightarrow -\infty$. As a consequence, $x(t,x_{12},g)\in M^s(\Pi_t(x_1,g))$ for all $t$ sufficiently positive, and $x(t,x_{12},g)\in M^u(\Pi_t(x_2,g))$ for all $t$ sufficiently negative. It then follows from Theorem \ref{class3} and Remark \ref{re-diffe-x} that, for any $t\in\mathbb{R}$,
\begin{equation}\label{eq1}
\sigma(x(t,x_{12},g)-x(t,x_1,g))\geq N,
\end{equation}
whenever $x(t,x_{12},g)-x(t,x_1,g)\in \Lambda$, and
\begin{equation}\label{eq2}
\sigma(x(t,x_{12},g)-x(t,x_2,g))\leq N-1,
\end{equation}
whenever $x(t,x_{12},g)-x(t,x_2,g)\in \Lambda$. Here $N=\dim M^u(x_{1},g)=M^u(x_{2},g)$.

As mentioned in Remark \ref{re-diffe-x}, $z(t)\equiv x(t,x_2,g)-x(t,x_1,g)$ is a nontrivial solution of the linear equation
$\dot{z}=B(t)z.$ Here $B(t)=\int_{0}^{1}Dg(t,(1-\tau)x(t,x_1,g)+\tau x(t,x_2,g))d\tau$, which is a cooperative tridiagonal matrix. Moreover, $B(t)$ is bounded and uniformly continuous on $\mathbb{R}$, because
$g$ is $C^1$-admissible and $(x_i,g)\in E_i$ with $E_i$ being a $1$-cover for $i=1,2$.

Now we claim that there exist positive numbers $T,\delta>0$ such that
\begin{equation*}
z(t)\in \Lambda_{\delta}\triangleq\{x\in \Lambda:{\rm dist}(x,\Lambda^c)>\delta\}
\end{equation*}
for all $\abs{t}>T$, where $\Lambda^c$ is the complement of $\Lambda$ in $\mathbb{R}^n$. Otherwise, there exists a sequence $t_n\rightarrow\infty$ (or $t_n\rightarrow-\infty$) such that
\begin{equation*}
    \text{dist}(z(t_n),\Lambda^c)\rightarrow 0\quad\text{as }n\rightarrow\infty.
\end{equation*}
Note $x_i\in E_i$, i=1,2. One can choose a subsequence still denoted by $t_n$ such that $x(t_n,x_i,g)\rightarrow w_i\in E_i$. Let $z_*=w_1-w_2\neq 0$. Then $z(t_n)\rightarrow z_*$ as $n\rightarrow\infty$. Moreover, $z_*\notin\Lambda$. By the boundedness and uniform continuity of $B(t)$, we may assume without loss of generality that $B(t+t_n)$ converges to $B_*(t)$ uniformly on any compact interval of $\mathbb{R}$. Then it follows from Lemma \ref{tech} that the solution $z_*(t)$ of $\dot{z}=B_*(t)z$, with initial value $z_*(0)=z_*$ satisfies $z_*(t)\in\Lambda$ for all $t\in\mathbb{R}$, a contradiction to $z_*\notin\Lambda$. Thus we have proved the claim. As a consequence, it entails that
\begin{equation}\label{near-unif}
z(t)+y\in \Lambda,
\end{equation}
 whenever $|y|<\frac{\delta}{2}$ and $\abs{t}>T$.

Recalling that  $|x(t,x_{12},g)-x(t,x_1,g)|\rightarrow 0$ as $t\rightarrow\infty$, it then follows
 from \eqref{eq1} and \eqref{near-unif} that
\begin{equation*}
\begin{split}
\sigma(z(t))
&=\sigma(x(t,x_2,g)-x(t,x_1,g)+x(t,x_1,g)-x(t,x_{12},g))\\
&=\sigma(x(t,x_2,g)-x(t,x_{12},g))\\
&\leq N-1,
\end{split}
\end{equation*}
for all $t$ sufficiently positive.
On the other hand, by \eqref{eq2} and \eqref{near-unif} one can similarly get that
\begin{equation*}
\begin{split}
\sigma(z(t))
&=\sigma(x(t,x_2,g)-x(t,x_1,g)+x(t,x_{12},g)-x(t,x_2,g))\\
&=\sigma(x(t,x_{12},g)-x(t,x_{1},g))\\
&\geq N,
\end{split}
\end{equation*}
for all $t$ sufficiently negative. Hence, we have obtained
a contradiction to Lemma \ref{minset}(ii), which completes the proof of the minimality of $\omega(x_0,g_0)$.
\end{proof}

\end{document}